\begin{document}

\newtheorem{definition}{Definition}[section]
\newtheorem{definitions}[definition]{Definitions}
\newtheorem{lemma}[definition]{Lemma}
\newtheorem{prop}[definition]{Proposition}
\newtheorem{theorem}[definition]{Theorem}
\newtheorem{cor}[definition]{Corollary}
\newtheorem{cors}[definition]{Corollaries}
\theoremstyle{remark}
\newtheorem{remark}[definition]{Remark}
\theoremstyle{remark}
\newtheorem{remarks}[definition]{Remarks}
\theoremstyle{remark}
\newtheorem{notation}[definition]{Notation}
\theoremstyle{remark}
\newtheorem{example}[definition]{Example}
\theoremstyle{remark}
\newtheorem{examples}[definition]{Examples}
\theoremstyle{remark}
\newtheorem{dgram}[definition]{Diagram}
\theoremstyle{remark}
\newtheorem{fact}[definition]{Fact}
\theoremstyle{remark}
\newtheorem{illust}[definition]{Illustration}
\theoremstyle{remark}
\newtheorem{rmk}[definition]{Remark}
\theoremstyle{definition}
\newtheorem{question}[definition]{Question}
\theoremstyle{definition}
\newtheorem{conj}[definition]{Conjecture}

\renewcommand{\marginpar}[1]{ }

\renewenvironment{proof}{\noindent {\bf{Proof.}}}{\hspace*{3mm}{$\Box$}{\vspace{9pt}}}

\title{Almost dual pairs and definable classes of modules}
\author{Akeel Ramadan Mehdi\footnote{akeel\_math@yahoo.com} and Mike Prest\footnote{mprest@manchester.ac.uk},\\ School of Mathematics \\ Alan Turing Building
\\University of Manchester\\
Manchester M13 9PL\\UK}

\maketitle

\abstract{In \cite{Holm}  Holm considers categories of right modules dual to those with support in a set of finitely presented modules. We extend some of his results by placing them in the context of elementary duality on definable subcategories.}

\section{Introduction}\label{intro}

Let $ {\mathcal B}={\rm add}({\mathcal B}) $ be an additive subcategory of $ R\mbox{-}{\rm mod}, $ the category of finitely presented left $ R$-modules.  Lenzing \cite{LenzTrans} studied properties of those categories of the form $ \varinjlim {\mathcal B} $, where this denotes the closure of ${\mathcal B}$ under direct limits in the category, $R\mbox{-}{\rm Mod}$, of all left $R$-modules.  Given a left module $M$, denote by $M^\ast$ its dual (right $R$-) module ${\rm Hom}_{\mathbb Z}(M, {\mathbb Q}/{\mathbb Z})$.  Holm (\cite{Holm})  considers the closure, ${\rm Prod}({\mathcal B}^\ast)$, of ${\mathcal B}^\ast $ under direct products and direct summands in ${\rm Mod}\mbox{-}R$, which he refers to as the category of modules with cosupport in ${\mathcal B}$ (his notation, which we will not use, for this is $ ({\rm Mod}\mbox{-}R)^{{\mathcal B}} $).  In this paper we set the duality between categories such as $ \varinjlim {\mathcal B} $ and $ {\rm Prod}({\mathcal B}^\ast )$ in a more general context and we extend some of the results from Holm's paper.

Every dual of a module is pure-injective and the classes of pure-injectives which are closed under products and direct summands correspond bijectively (by taking their closures under pure submodules) to the type-definable classes of modules considered by Burke (\cite{BurThes}) and hence also to the closed subsets in the full support topology that he defined on the set of indecomposable pure-injective modules.  These type-definable classes were introduced as an extension of the definable classes which arose in the model theory of modules (\cite{Zie}, \cite{PreBk}).  From this perspective it is natural to extend results from \cite{Holm} by considering the closure of classes under pure submodules.  There is a duality, elementary duality, between definable classes (\cite{HerzDual}) and, again, this provides a perspective which allows us to clarify the relation between classes such as $ \varinjlim {\mathcal B} $ and $ {\rm Prod}({\mathcal B}^\ast )$.

\vspace{4pt}

For a class ${\mathcal X}$ of modules (we always assume our classes to be closed under isomorphism) we write ${\rm add}({\mathcal X})$ (respectively ${\rm Add}({\mathcal X})$) for the closure of ${\mathcal X}$ under finite (resp.~arbitrary) direct sums and direct summands, we set ${\mathcal X}^\ast =\{ M^\ast: M\in {\mathcal X}\}$ to be the class of duals of modules in ${\cal X}$, we denote by ${\rm Prod}({\mathcal X})$ the closure of ${\mathcal X}$ under direct products and direct summands and by ${\rm Prod}^+({\mathcal X})$ we denote the closure of ${\mathcal X}$ under direct products and pure submodules.  We also write ${\mathcal X}^+$ for the closure of ${\mathcal X}$ under pure submodules.  We write ${\rm Pinj}({\cal X})$ for the class of pure-injective modules which are in ${\cal X}$.  By ${\rm pinj}_R$ we denote the set of isomorphism classes of indecomposable pure-injective right $R$-modules.  We will use \cite{PreNBK} as a handy reference for definitions and results around purity; there are many other sources.

Recall, e.g.~\cite[4.3.29]{PreNBK}, than any character/dual module $M^\ast$ is pure-injective.

\vspace{4pt}

Let ${\mathcal S}$, respectively ${\mathcal P}$, denote subclasses (or subcategories) of $R\mbox{-}{\rm Mod}$, respectively ${\rm Mod}\mbox{-}R$.  We say that $({\mathcal S},{\mathcal P})$ is an {\bf almost dual pair} if:

1.  ${\mathcal P}={\rm Prod}({\mathcal P})$ and ${\mathcal P}$ is closed under pure-injective hulls\footnote{In \cite{MehdiThes}, and also in Holm and J\o rgensen's notion of a duality pair \cite{HolmJorg}, the class ${\cal P}$ is not required to be closed under pure-injective hulls.}

2.  $M\in {\mathcal S}$ iff $M^\ast \in {\mathcal P}$.

Immediate examples include:  $(R\mbox{-}{\rm Mod}, {\rm Pinj}({\rm Mod}\mbox{-}R))$;  $(R\mbox{-}{\rm Flat}, {\rm Abs}\mbox{-}R)$, $(R\mbox{-}{\rm Flat}, {\rm Inj}\mbox{-}R)$ (\cite[p.~239]{Lambek}) where we use obvious notation for flat, absolutely pure (=fp-injective) and injective modules; the pair $(R\mbox{-}{\rm Abs},{\rm Flat}\mbox{-}R)$ is almost dual iff $R$ is left coherent (\cite[1.6]{Wurf}).

\begin{lemma}\label{Sclosed}\marginpar{Sclosed} Suppose that $({\mathcal S},{\mathcal P})$ is an almost dual pair.  If $0\rightarrow L\rightarrow M\rightarrow N\rightarrow 0$ is a pure-exact sequence, then $M \in {\mathcal S}$ iff $L,N\in {\cal S}$.  Moreover ${\mathcal S}={\rm Add}({\mathcal S}) = \varinjlim {\mathcal S}$.
\end{lemma}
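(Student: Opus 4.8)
The plan is to transport everything through the duality $M\mapsto M^{\ast}$ and exploit that $\mathcal P$ is closed under direct products and direct summands. The key observation is that if $0\rightarrow L\rightarrow M\rightarrow N\rightarrow 0$ is pure-exact then its dual $0\rightarrow N^{\ast}\rightarrow M^{\ast}\rightarrow L^{\ast}\rightarrow 0$ is again pure-exact (apply the exact functor $\operatorname{Hom}_{\mathbb Z}(-,\mathbb Q/\mathbb Z)$ together with the adjunction $(F\otimes_R M)^{\ast}\cong\operatorname{Hom}_R(F,M^{\ast})$ to recognise the pure-exactness condition on the dual), and since $N^{\ast}$ is pure-injective this dual sequence splits. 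Hence $M^{\ast}\cong L^{\ast}\oplus N^{\ast}$. Now if $L,N\in\mathcal S$ then $L^{\ast},N^{\ast}\in\mathcal P$, so $M^{\ast}\cong L^{\ast}\oplus N^{\ast}\in\mathcal P$ because $\mathcal P={\rm Prod}(\mathcal P)$ is closed under (finite) products; thus $M\in\mathcal S$. Conversely if $M\in\mathcal S$ then $M^{\ast}\in\mathcal P$, and $L^{\ast}$, $N^{\ast}$ are direct summands of $M^{\ast}$, hence lie in $\mathcal P$ since $\mathcal P$ is closed under direct summands; therefore $L,N\in\mathcal S$. This proves the first assertion. (Note only the equality $\mathcal P={\rm Prod}(\mathcal P)$ of axiom~1 is needed, not closure under pure-injective hulls.)

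\textbf{Closure of $\mathcal S$ under ${\rm Add}$.} Clearly $\mathcal S\subseteq{\rm Add}(\mathcal S)$, so it remains to show $\mathcal S$ is closed under arbitrary direct sums and under direct summands. Direct summands are immediate from the first part: if $M=M_1\oplus M_2\in\mathcal S$ then the split sequence $0\rightarrow M_1\rightarrow M\rightarrow M_2\rightarrow 0$ is pure-exact, so $M_1,M_2\in\mathcal S$. For direct sums, let $\{M_i\}_{i\in I}\subseteq\mathcal S$; then $\big(\bigoplus_{i}M_i\big)^{\ast}\cong\prod_{i}M_i^{\ast}$, each $M_i^{\ast}\in\mathcal P$, and $\mathcal P$ is closed under direct products, so $\big(\bigoplus_i M_i\big)^{\ast}\in\mathcal P$ and hence $\bigoplus_i M_i\in\mathcal S$. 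Thus $\mathcal S={\rm Add}(\mathcal S)$.

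\textbf{Closure of $\mathcal S$ under $\varinjlim$.} Since $\mathcal S\subseteq\varinjlim\mathcal S$, it suffices to show $\varinjlim\mathcal S\subseteq\mathcal S$. Let $(M_\lambda)_\lambda$ be a directed system with all $M_\lambda\in\mathcal S$. By the standard presentation of a directed colimit (see \cite{PreNBK}) there is a pure-exact sequence $0\rightarrow K\rightarrow\bigoplus_\lambda M_\lambda\rightarrow\varinjlim_\lambda M_\lambda\rightarrow 0$. By the ${\rm Add}$-closure just established, $\bigoplus_\lambda M_\lambda\in\mathcal S$, and then by the first part of the lemma the pure quotient $\varinjlim_\lambda M_\lambda$ lies in $\mathcal S$. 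Hence $\mathcal S=\varinjlim\mathcal S$, completing the proof.

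\textbf{Main obstacle.} No step is technically deep; the one point that must be handled carefully is the claim that the dual of a pure-exact sequence splits, which relies on correctly identifying the pure-exactness of the dualised sequence via the tensor–hom adjunction and the injectivity of $\mathbb Q/\mathbb Z$. Everything else is a routine application of the closure properties of $\mathcal P$ together with the identities $(\bigoplus M_i)^{\ast}\cong\prod M_i^{\ast}$ and the pure presentation of a direct limit.
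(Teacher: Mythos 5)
Your proposal is correct and follows essentially the same route as the paper: dualise the pure-exact sequence to a split exact sequence of modules in (or not in) $\mathcal P$, use $(\bigoplus_i M_i)^\ast \cong \prod_i M_i^\ast$ for closure under direct sums and summands, and realise a direct limit as a pure epimorphic image of the direct sum. The only cosmetic difference is that you justify the splitting of the dual sequence directly via the tensor--hom adjunction and pure-injectivity of $N^\ast$, where the paper simply cites the standard reference for that fact.
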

\begin{proof} If $0\rightarrow L\rightarrow M\rightarrow N\rightarrow 0$ is a pure-exact sequence with $M \in {\mathcal S}$ then, applying ${\rm Hom}_{\mathbb Z}(-,{\mathbb Q}/{\mathbb Z})$, we obtain the split exact sequence (e.g.~\cite[4.3.30]{PreNBK}) $0\rightarrow N^\ast \rightarrow M^\ast  \rightarrow L^\ast \rightarrow 0$, from which we see that $M^\ast \in {\mathcal P}$ iff $L^\ast, N^\ast \in {\cal P}$, whence we obtain the first statement.

Also, since $(\bigoplus_{i\in I}M_i)^\ast = \prod_{i\in I} M_i^\ast$, the first equality of the second statement is immediate from the definitions and since, for any directed system $(M_i)_i$ the canonical map $\bigoplus_i M_i \rightarrow \varinjlim_i M_i$ is a pure epimorphism (\cite[p.~56]{RayGru}), the second equality follows from the first assertion.
\end{proof}

\begin{cor}\label{Sprecov}\marginpar{Sprecov} If  $({\mathcal S},{\mathcal P})$ is an almost dual pair then ${\cal S}$ is (pre-)covering in $R\mbox{-}{\rm Mod}$.
\end{cor}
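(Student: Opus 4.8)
The plan is to extract from Lemma~\ref{Sclosed} exactly the two closure properties that make the standard ``existence of covers'' machinery apply to $\mathcal S$: closure under direct limits and closure under pure submodules. Given these, $\mathcal S$ is precovering, and the passage from precovers to covers is then automatic.

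By Lemma~\ref{Sclosed}, $\mathcal S=\varinjlim\mathcal S=\mathrm{Add}(\mathcal S)$, so $\mathcal S$ is closed under direct limits (hence under arbitrary direct sums and direct summands), and the first assertion of that lemma, applied to a pure submodule $L$ of some $M\in\mathcal S$, shows that $\mathcal S$ is closed under pure submodules. First I would use these to put $\mathcal S$ into the form ``$\varinjlim$ of a skeletally small class''. Fix a regular cardinal $\kappa>|R|+\aleph_0$. By the usual purity argument (any subset of a module of cardinality $<\kappa$ lies in a pure submodule of cardinality $<\kappa$; see e.g.~\cite{PreNBK}), every $S\in\mathcal S$ is the directed union of its pure submodules of cardinality $<\kappa$, each of which again lies in $\mathcal S$. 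Hence $\mathcal S=\varinjlim\mathcal S_\kappa$, where $\mathcal S_\kappa$ is a set of representatives for the isomorphism types of modules in $\mathcal S$ of cardinality $<\kappa$.

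Now let $M\in R\mbox{-}{\rm Mod}$. The modules $S'\in\mathcal S_\kappa$ together with the (set of) maps $S'\to M$ form a solution set: given any $g\colon S\to M$ with $S\in\mathcal S$, the directed-union description of $S$ exhibits $g$ as a directed colimit of maps from members of $\mathcal S_\kappa$ to $M$. The usual construction (the one used to prove that $R$-flat modules are covering) then assembles, via a suitable direct limit, a single $\mathcal S$-precover $S_M\to M$, using closure of $\mathcal S$ under direct limits. Finally, to upgrade this precover to a cover one applies Enochs' theorem that a precovering class closed under direct limits is automatically covering; closure under direct limits is again supplied by Lemma~\ref{Sclosed}. (Alternatively one may quote in one step the known fact that any class of the form $\varinjlim\mathcal A$ with $\mathcal A$ skeletally small is covering.)

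The substantive point is the precovering statement itself: turning the solution set $\mathcal S_\kappa$ into an actual $\mathcal S$-precover is the part that requires genuine work, being the directed-colimit bookkeeping familiar from the flat cover theorem; I expect this to be cited rather than redone. By contrast, the reduction of the problem to that statement, and the final passage from precover to cover, are both immediate consequences of Lemma~\ref{Sclosed}.
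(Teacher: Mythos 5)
Your argument is correct and is essentially the same as the paper's, which simply cites \cite[2.5]{HolmJorg} and \cite[Thm.~4]{KraAdj}: both routes rest on Lemma~\ref{Sclosed} supplying closure of ${\mathcal S}$ under direct limits and pure submodules, after which the standard Enochs--El~Bashir--Krause machinery (bounding by pure submodules of cardinality $<\kappa$ to get a generating set, then the direct-limit construction of precovers and the upgrade to covers) applies verbatim. The only difference is that you sketch the content of the cited theorems rather than quoting them, and you correctly flag that the one genuinely nontrivial step --- turning the solution set into a precover --- is the part to be cited rather than redone.
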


This follows directly by \cite[2.5]{HolmJorg}, \cite[Thm.~4]{KraAdj}.

Although ${\mathcal S}$ is completely determined by ${\mathcal P}$ the converse is not true:  if $({\mathcal S},{\mathcal P})$ is an almost dual pair then both $({\mathcal S},{\rm Pinj}({\mathcal P}))$ and $({\mathcal S},{\mathcal P}^+)$ are almost dual pairs and these are equal iff ${\cal P}^+$ consists only of pure-injectives, but that is a strong condition (which we consider in Section \ref{secfp}), being equivalent to $\Sigma$-pure-injectivity of every member of ${\cal P}$.  We will see that an additional condition is needed for these to be the upper and lower bounds of the possibilities for ${\cal P}$.

Indeed, if $({\mathcal S},{\mathcal P})$ is an almost dual pair and if ${\rm Prod}({\mathcal S}^\ast ) = {\rm Pinj}({\cal P})$ then we will say that this is a {\bf dual pair}\footnote{Note that this is considerably more restrictive than the similarly-named notion of duality pair from \cite{HolmJorg}.}; in this case it follows directly that ${\rm Pinj}({\mathcal S}^\ast) \subseteq {\mathcal P} \subseteq ({\mathcal S}^\ast)^+$.  All the examples above are actually dual pairs but we will give an example later (\ref{notdual}) of an almost dual pair with ${\rm Prod}({\mathcal S}^\ast )$ properly contained in ${\rm Pinj}({\cal P})$.  In order to establish that example, we need to distinguish between arbitrary classes of pure-injectives closed under products and direct summands and those which arise by closing classes of duals of modules under these operations; we do this in Section \ref{secdual}.  In that section we also show that the definition of almost dual pair is independent of the choice of duality - for example, if $R$ is an algebra over a field $K$ then it would be natural to use ${\rm Hom}_K(-,K)$ in place of ${\rm Hom}_{\mathbb Z}(-, {\mathbb Q}/{\mathbb Z})$, or one might use the ``local" duality $M \mapsto {\rm Hom}_S(M,E)$ where $S={\rm End}(M)$ and $E$ is an injective cogenerator for $S$-modules.

First, however, we note the connection with torsion theories on the associated functor category and with topologies on ${\rm pinj}_R$.

\section{Type-definable subcategories and torsion theories on the functor category}

A class ${\mathcal X}$ of (right $R$-)modules is said to be {\bf type-definable} (\cite{BurThes}, see \cite[\S\S5.3.7, 12.7]{PreNBK}) if it is closed under pure-injective hulls, direct products and pure submodules (the actual definition is in terms of pp-types but this is equivalent).  If a class ${\cal X}$ is type-definable then ${\rm Pinj}({\cal X})$ is a class ${\cal P}$ of pure-injectives satisfying ${\cal P}={\rm Prod}({\cal P})$ and every such class of pure-injectives arises in this way from a type-definable class.  There is a natural bijection between these classes and hereditary torsion theories on the (locally coherent, Grothendieck abelian) functor category $(R\mbox{-}{\rm mod}, {\bf Ab})$.

This bijection is induced by the full embedding $\epsilon :{\rm Mod}\mbox{-}R \rightarrow (R\mbox{-}{\rm mod}, {\bf Ab})$ which takes $M_R$ to the functor $(M\otimes_R-)$ and which has the natural action on morphisms.  The image under $\epsilon$ of an exact sequence is exact iff the original sequence is pure-exact and the image of a module is injective iff the original module is pure-injective (see \cite[\S 1]{GrJeDim}, \cite[B16]{JeLe} or \cite[\S12.1]{PreNBK}).  Hereditary torsion theories on Grothendieck abelian categories are in bijection with classes ${\cal I}$ of injective objects closed under direct products and direct summands - the torsionfree class being the class of subobjects of objects in ${\cal I}$ -  so we have the following.

\begin{remark}\label{tpdef}\marginpar{tpdef} There are natural bijections (as described above) between:

\noindent type-definable classes of right $R$-modules;

\noindent classes ${\cal P}$ of pure-injective right $R$-modules satisfying ${\cal P} ={\rm Prod}({\cal P})$

\noindent hereditary torsion theories on $(R\mbox{-}{\rm mod}, {\bf Ab})$.
\end{remark}

We will see (\ref{notdual}) that not every such class ${\cal P}={\rm Prod}({\cal P})$ of pure-injectives has the form ${\rm Prod}({\cal S}^\ast)$ for some class ${\cal S}$ of modules.

\begin{remark}\label{proddual}\marginpar{proddual} There are natural bijections  between the following:

\noindent classes ${\cal P}$ of pure-injective right $R$-modules of the form ${\rm Prod}({\cal S}^\ast)$ for some class ${\cal S}$ of left $R$-modules;

\noindent dual pairs $({\cal S}, {\cal P})$ with minimal ${\cal P}={\rm Prod}({\cal S}^\ast)$, that is, ${\cal P}={\rm Pinj}({\cal P})$;

\noindent dual pairs $({\cal S}, {\cal P})$ with maximal ${\cal P}={\rm Prod}({\cal S}^\ast)^+$, that is ${\cal P}={\cal P}^+$;
\end{remark}

A subcategory of a module category is a {\bf definable subcategory} if it is closed under direct products, direct limits and pure submodules, equivalently if it is type-definable and closed under direct limits.  In the bijection above these correspond exactly to the torsion theories of finite type (see \cite[12.4.1]{PreNBK}), meaning that the torsion class is generated by finitely presented objects.

The results in this paper could be presented, as Holm does to some extent in \cite{Holm}, in a way which makes use of this functor-category perspective.

\section{Dualities}\label{secdual}\marginpar{secdual}

If $M$ is a left $R$-module, $S\rightarrow {\rm End}(M)$ is a ring homomorphism and $E$ is an injective cogenerator for right $S$-modules then we will say that ${\rm Hom}_S(-,E_S)$ is a duality that applies to $M$ and we will write $M^\ast$ for the right $R$-module ${\rm Hom}_S(M_S,E_S)$.  In this section we will use the notions of pp formula and pp-type and associated results from the model theory of modules (\cite{PreNBK} is one reference for these) because these apply nicely to the relation between $M$ and $M^\ast$.  In particular we need the following, for which see \cite[\S 2(c)]{Z-HZPSS}, \cite[1.5]{PRZ2} (or \cite[1.3.12]{PreNBK}).

\begin{prop}\label{dualann}\marginpar{dualann} Let $M$ be any left $R$-module, let $\phi(v)$ be a pp formula (with one free variable) for right $R$-modules and let $^*$ be a duality that applies to $M$.  Then the solution set, $\phi(M^\ast)$, of $\phi$ in $M^\ast$ is the annihilator $\{ f\in M^\ast: f\cdot D\phi(M)=0\}$ of the solution set of $D\phi$ in $M$.  That is, $f\in \phi(M^\ast)$ iff $D\phi(M)\leq {\rm ker}(f)$.
\end{prop}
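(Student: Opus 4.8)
The plan is to unwind both sides of the claimed equality using the definitions of "pp formula" and "elementary dual" $D\phi$, together with the standard way that $\mathrm{Hom}_S(-,E_S)$ interacts with solution sets of pp conditions. Recall that a pp formula $\phi(v)$ in one free variable has the shape $\exists \bar w\,(v = \bar w A' \wedge \bar w A = 0)$ (or, after tidying, $\exists \bar w\ (A \bar w^{\mathrm t} = v\cdot \text{something})$, depending on the convention in \cite{PreNBK}); its elementary dual $D\phi$ is the pp formula for left modules obtained by the prescribed transpose-type operation on the defining matrices, and the fundamental duality lemma (\cite[1.3.1]{PreNBK}, in the form used for $\mathrm{Hom}_S(-,E_S)$) says precisely that for a left module $M$ and $f\in M^\ast$, one has $f\in\phi(M^\ast)$ exactly when $f$ annihilates the left-module solution set $D\phi(M)$.

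First I would set up the duality $\mathrm{Hom}_S(-,E_S)$ concretely: $M^\ast = \mathrm{Hom}_S(M_S,E_S)$ is a right $R$-module via $(f\cdot r)(m) = f(rm)$, and since $E$ is an injective cogenerator for $S$-modules, the functor $\mathrm{Hom}_S(-,E)$ is exact and faithful, and a subset $X\subseteq M$ satisfies: $X$ generates $M$ as an $S$-module iff no nonzero $f\in M^\ast$ kills $X$; more to the point, for any $S$-submodule $N\le M$, the annihilator $N^\perp = \{f : f(N)=0\}$ is exactly $(M/N)^\ast$ sitting inside $M^\ast$, and $f\in N^\perp$ iff $N\le\ker f$. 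This is the algebraic fact I will lean on; it is standard and I would just cite it. The content of the proposition is then to identify the pp-definable subgroup $\phi(M^\ast)$ of $M^\ast$ with the annihilator $D\phi(M)^\perp$.

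Next I would run the computation, which is the heart of the matter. Write $\phi(v) \equiv \exists \bar w\ \theta(v,\bar w)$ where $\theta$ is a system of $R$-linear equations; a tuple $f\in M^\ast$ (i.e.\ $f:M\to E$ an $S$-map) lies in $\phi(M^\ast)$ iff there is $\bar g \in (M^\ast)^{\mathrm{len}(\bar w)}$ with $\theta(f,\bar g)$ holding in $M^\ast$. Using that $(M^\ast)^n \cong (M^n)^\ast$ and that matrix multiplication on the $M^\ast$ side is the transpose-dual of matrix multiplication on the $M$ side (the "$\mathrm{Hom}$ turns columns into rows" phenomenon), I would translate the existence of such $\bar g$ into a condition on $f$ alone, namely that $f$ kills the solution set in $M$ of the dual system $D\phi$. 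The cleanest route is not to do this by hand but to invoke the version of the duality theorem for pp formulas that is already in the literature exactly in this generality — \cite[\S 2(c)]{Z-HZPSS}, \cite[1.5]{PRZ2}, \cite[1.3.12]{PreNBK} — which states the biconditional $f\in\phi(M^\ast) \iff D\phi(M)\le\ker f$; essentially the proposition \emph{is} that theorem, re-packaged as a statement about annihilators, so the proof is the short observation that $\{f\in M^\ast : f\cdot D\phi(M) = 0\} = \{f : D\phi(M)\le\ker f\}$ and then the citation.

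\textbf{The main obstacle} is bookkeeping with conventions: the paper writes $f\cdot D\phi(M) = 0$ where $f$ is being applied to elements of $D\phi(M)\subseteq M$, so I must be careful that "$f\cdot x$" means $f(x)\in E$ and that "$=0$" is the zero of $E$, and I must match the matrix/transpose convention defining $D\phi$ to whichever source I cite so that the dual formula really is the one whose solution set gets annihilated — a sign or transpose slip here would make the statement come out for $D(D\phi)=\phi$ instead. I would pin down one reference (say \cite[1.3.12]{PreNBK}) for both the definition of $D\phi$ and the duality biconditional so the conventions are automatically consistent, and then the rest is immediate. There is no genuine difficulty beyond this; the generality of allowing an arbitrary $S\to\mathrm{End}(M)$ and injective cogenerator $E$ (rather than just $\mathbb{Z}$ and $\mathbb{Q}/\mathbb{Z}$) is already absorbed into the cited form of the theorem, since $E$ being an injective cogenerator is exactly what makes $\mathrm{Hom}_S(-,E)$ behave like $\mathrm{Hom}_{\mathbb Z}(-,\mathbb Q/\mathbb Z)$ for these purposes.
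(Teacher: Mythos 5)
Your proposal is correct and matches the paper's treatment: the paper gives no proof of this proposition but simply cites \cite[\S 2(c)]{Z-HZPSS}, \cite[1.5]{PRZ2} and \cite[1.3.12]{PreNBK}, which is exactly the reduction you arrive at (the statement is the cited duality theorem rephrased as an annihilator condition). Your additional remarks about matching the transpose conventions for $D\phi$ and handling the general duality ${\rm Hom}_S(-,E_S)$ are sensible precautions but add nothing beyond what the citation already covers.
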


Here $D\phi$ is the elementary dual of the formula $\phi$ (\cite{PreDual}, see \cite[\S 1.3]{PreNBK}).  Duality applied twice is equivalent to the identity - $DD\phi(M) =\phi(M)$ for every module $M$ - so the result with the roles of $\phi$ and $D\phi$ reversed also is true.  We freely use the fact that for every module $M$ and pp formula $\phi$, it is the case that $\phi(M)$ is an ${\rm End}(M)$-submodule of $M$ (see \cite[1.1.8]{PreNBK}).

\begin{theorem}\label{anydual}\marginpar{anydual} Suppose that $^\ast$ and $^\sharp$ are dualities each of which applies to the module $M$.  Then ${\rm Prod}(M^\ast) ={\rm Prod}(M^\sharp)$.
\end{theorem}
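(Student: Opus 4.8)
The plan is to show that $M^\ast$ and $M^\sharp$ have the same pp-type-theoretic behaviour as pure-injective modules, and then invoke the fact that the closure under products and direct summands of a pure-injective module is determined by its pp-type data (equivalently, by which pp-pairs are ``open'' on it). Concretely, both $M^\ast$ and $M^\sharp$ are pure-injective (they are character/dual modules in the sense of the dualities considered here, cf.~the remark after \cite[4.3.29]{PreNBK} and the setup in Section \ref{secdual}), so each decomposes as the pure-injective hull of a direct sum of indecomposable pure-injectives together with a ``superdecomposable'' part, and the class ${\rm Prod}(N)$ for a pure-injective $N$ is the smallest class containing $N$ closed under products and summands; by a standard argument this is controlled by the set of indecomposable pure-injectives $N'$ such that $N'$ is a direct summand of some power $N^\kappa$, equivalently (see \cite[\S\S5.3.7, 12.7]{PreNBK}) by the type-definable class generated by $N$.

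First I would use Proposition \ref{dualann}: for every pp formula $\phi(v)$ in one free variable (for right modules), the solution set $\phi(M^\ast)$ is the annihilator $\{f : D\phi(M)\le \ker f\}$ of $D\phi(M)$ inside $M^\ast = {\rm Hom}_S(M_S, E_S)$, and likewise $\phi(M^\sharp)$ is the corresponding annihilator inside $M^\sharp = {\rm Hom}_{S'}(M_{S'}, E'_{S'})$ for the second duality. The key point is that both annihilators are computed from the \emph{same} submodule $D\phi(M)$ of $M$ — which is intrinsic to $M$, being an ${\rm End}(M)$-submodule and not depending on which duality we use. From this I would deduce that for any pair of pp formulas $\psi\le \phi$, the pp-pair $\phi/\psi$ is ``closed'' on $M^\ast$ (i.e.~$\phi(M^\ast)=\psi(M^\ast)$) if and only if $D\psi(M) = D\phi(M)$, if and only if the same pp-pair is closed on $M^\sharp$. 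So $M^\ast$ and $M^\sharp$ ``open'' exactly the same pp-pairs; equivalently they lie in exactly the same definable, hence the same type-definable, subcategories.

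Next I would translate ``same open pp-pairs'' into the desired conclusion. The type-definable subcategory generated by a pure-injective $N$ is precisely the class of modules on which every pp-pair closed on $N$ remains closed; by Remark \ref{tpdef} (and \cite[\S 12.7]{PreNBK}) this equals ${\rm Prod}^+(N)$, and its pure-injective members form ${\rm Prod}(N)$. Since $M^\ast$ and $M^\sharp$ close the same pp-pairs, they generate the same type-definable subcategory, and hence ${\rm Prod}(M^\ast) = {\rm Pinj}\big({\rm Prod}^+(M^\ast)\big) = {\rm Pinj}\big({\rm Prod}^+(M^\sharp)\big) = {\rm Prod}(M^\sharp)$. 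Alternatively, and perhaps more cleanly, one can argue directly at the level of indecomposables: an indecomposable pure-injective $N$ lies in ${\rm Prod}(M^\ast)$ iff the pp-type (open/closed pattern) of every element of $N$ is ``supported'' by $M^\ast$ in the appropriate sense, and this support condition depends only on the family of subgroups $\{D\phi(M)\}_\phi$, hence is the same for $M^\sharp$.

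The main obstacle I anticipate is the passage from the one-variable statement of Proposition \ref{dualann} to the control of \emph{arbitrary} pp-pairs (in possibly many free variables), since membership in a type-definable class is governed by pp-types in all finite tuples of variables. I expect this to be routine rather than genuinely hard: Proposition \ref{dualann} extends verbatim to pp formulas $\phi(\overline v)$ in several variables (the solution set in $M^\ast$ is the annihilator of $D\phi(M)\le M^{|\overline v|}$, with $D\phi$ the $n$-variable elementary dual), and the argument above goes through with $\phi(M)$, $\psi(M)$ replaced by their many-variable analogues. The only care needed is to make sure the identification of $D\phi(M)$ as a duality-independent $\mathrm{End}(M)$-submodule of $M^n$ is stated at the right level of generality, after which the equality of the families $\{D\phi(M)\}_\phi$ across the two dualities is immediate and forces ${\rm Prod}(M^\ast)={\rm Prod}(M^\sharp)$.
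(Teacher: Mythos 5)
There is a genuine gap at the pivot of your argument: you pass from ``$M^\ast$ and $M^\sharp$ open the same pp-pairs'' to ``they lie in the same type-definable subcategories''. Equality of open pp-pairs only gives that $M^\ast$ and $M^\sharp$ generate the same \emph{definable} subcategory; it does not control ${\rm Prod}$-closures, which are governed by the (infinitary) pp-\emph{types} realised, not by the (finitary) pp-pairs opened. Two pure-injectives can generate the same definable subcategory while having different ${\rm Prod}$-closures: in Example \ref{notdual} of this paper, ${\rm Prod}(E)$ contains no nonzero module with an indecomposable direct summand, yet the definable subcategory generated by $E$ has nonempty Ziegler-closed support and hence is also generated by the product $N$ of the indecomposable pure-injectives in that support; clearly ${\rm Prod}(N)\neq {\rm Prod}(E)$. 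So the implication you rely on fails in general, and your identification of the type-definable class generated by a pure-injective with the class of modules on which its closed pp-pairs stay closed is exactly the definable/type-definable distinction that Remark \ref{tpdef} and the surrounding discussion are careful to keep separate.

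What the paper actually does is work with whole pp-types. For each nonzero $f\in M^\ast$ with pp-type $p$, Proposition \ref{dualann} translates $p$ into the single condition $\sum_{\phi\in p}D\phi(M)\leq {\rm ker}(f)$, together with $D\psi(M)\nleq \sum_{\phi\in p}D\phi(M)$ for each $\psi\notin p$; the injective-cogenerator property of the dualising object for $^\sharp$ then produces, for each such $\psi$, an element $g_\psi\in M^\sharp$ annihilating $\sum_{\phi\in p}D\phi(M)$ but not $D\psi(M)$, and the tuple $g=(g_\psi)_{\psi\in p^-}$ realises exactly $p$ in $(M^\sharp)^{p^-}$. By \cite[4.3.9]{PreNBK} there is a morphism $M^\ast\to (M^\sharp)^{p^-}$ sending $f$ to $g$, and the product of these over all $f$ is a pure, hence split, embedding of $M^\ast$ into a power of $M^\sharp$. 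Your closing ``alternative'' paragraph gestures towards this support-of-pp-types idea, but it is left at the level of ``in the appropriate sense'' and moreover restricts attention to indecomposables, whereas ${\rm Prod}(M^\ast)$ need not be determined by its indecomposable members. Incidentally, the obstacle you single out --- extending Proposition \ref{dualann} to several free variables --- is not where the difficulty lies: the paper's argument runs entirely with one-variable pp-types.
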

\begin{proof} It is enough to show that $M^\ast \in {\rm Prod}(M^\sharp)$.  To establish that, it will enough to prove that for each nonzero $f\in M^\ast$ there is $g\in (M^\sharp)^I$ for some set $I$, such that the {\bf pp-type of} $f$ {\bf in} $M^\ast$ - the set of all pp formulas $\phi$ such that $f\in \phi(M^\ast)$ - equals the pp-type of $g$ in $(M^\sharp)^I$.  For then there will, by \cite[4.3.9]{PreNBK}, be a morphism $\alpha_f$ from $M^\ast$ to $(M^\sharp)^I$ taking $f$ to $g$.  The product over all $f\in M^\ast$ of these maps $\alpha_f$ will then be a pure, hence split, embedding of $M^\ast$ into a direct product of copies of $M^\sharp$.  Let us write $p$ for the pp-type of $f$ in $M^\ast$.

By \ref{dualann}, for each $\phi \in p$, $D\phi(M)\leq {\rm ker}(f)$, therefore $\sum_{\phi \in p} D\phi(M)\leq {\rm ker}(f)$, and for each pp formula (in the same free variable $v$) $\psi$ not in $p$ (let us write $p^-$ for the set of these), $D\psi(M)\nleq {\rm ker}(f)$, in particular $D\psi(M)\nleq \sum_{\phi \in p} D\phi(M)$.

Therefore since these are $S$-modules and $E$ is an injective cogenerator for $S$-modules (with the notation introduced above), there is $g_\psi \in M^\sharp$ such that $\sum_{\phi \in p} D\phi(M) \leq {\rm ker}(g_\psi)$ and $D\psi(M) \nleq {\rm ker}g_\psi$.  So, by \ref{dualann}, for all $\phi\in p$, $g_\psi \in \phi(M^\sharp)$ but $g_\psi \notin \psi(M^\sharp)$.

Set $g=(g_\psi)_{\psi \in p^-} \in (M^\sharp)^{p^-}$.  Then, since pp formulas commute with direct products (\cite[1.2.3]{PreNBK}), the pp-type of $g$ is precisely $p$, as required.
\end{proof}

\begin{theorem}\label{dualindec}\marginpar{dualindec} Let $M$ be any nonzero module and let $M^\ast$ be a dual of $M$.  Then $M^\ast$ has an indecomposable direct summand.
\end{theorem}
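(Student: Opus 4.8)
I need to show that a dual module $M^\ast$ (nonzero) has an indecomposable direct summand. Since $M^\ast$ is pure-injective, the natural tool is the theory of pure-injective modules: a pure-injective module has an indecomposable direct summand provided it has an indecomposable direct summand of its pure-injective hull structure, or more concretely, provided some element has a pp-type whose "closure" gives rise to an indecomposable. The cleanest route uses the fact that over pure-injective modules, indecomposable summands correspond to elements whose pp-type is the pp-type of an element of an indecomposable pure-injective.
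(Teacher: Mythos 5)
Your proposal is a plan, not a proof: it identifies the correct general machinery (an element of a pure-injective module whose pp-type is irreducible lies in an indecomposable direct summand), but it never produces such an element in $M^\ast$, and that construction is the entire content of the theorem. Note that the statement is emphatically \emph{not} true for arbitrary pure-injective modules --- the paper's own Example \ref{notdual} exhibits a pure-injective (the injective hull of $K\langle X,Y\rangle$) with no indecomposable direct summand --- so any appeal to ``the theory of pure-injective modules'' that does not use the specific structure of $M^\ast$ as a dual is bound to fail. The special feature of duals that must be exploited is Proposition \ref{dualann}: $f\in \phi(M^\ast)$ iff $D\phi(M)\leq \ker(f)$, so pp-types of elements of $M^\ast$ are controlled by which sums of pp-definable subgroups of $M$ a functional kills, and the injective cogenerator property of $E$ lets you realise any such annihilation condition.

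Concretely, the missing argument is: pick $0\neq a\in M$; by Zorn's Lemma choose a lattice ideal ${\cal I}$ in the lattice of pp-definable subgroups of $M$, maximal with respect to $a\notin \psi(M)$ for all $\psi(M)\in{\cal I}$; choose $f\in M^\ast$ with $\sum_{\psi\in{\cal I}}\psi(M)\leq\ker(f)$ and $f(a)\neq 0$ (possible because $E$ is an injective cogenerator); show via \ref{dualann} that the pp-type of $f$ in $M^\ast$ is exactly $\{D\psi:\psi\in{\cal I}\}$; and then verify Ziegler's criterion for irreducibility of this pp-type using the maximality of ${\cal I}$ (given $\phi_1,\phi_2\notin{\cal I}$, maximality yields $\psi_i\in{\cal I}$ with $a\in(\psi_i+\phi_i)(M)$, and setting $\psi=\psi_1+\psi_2$ one gets $D\psi\in p$ with $(D\psi\wedge D\phi_1)+(D\psi\wedge D\phi_2)\notin p$). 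Only after all of that does the general fact you cite apply. As it stands, your proposal contains no step that distinguishes $M^\ast$ from a pure-injective with no indecomposable summands, so it cannot be completed along the lines you sketch without importing essentially all of the above.
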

\begin{proof} Choose $a\in M$, $a\neq 0$.  By Zorn's Lemma there is, in the lattice of pp-definable subgroups of $M$, a lattice ideal ${\cal I}$ such that, for all $\psi(M) \in {\cal I}$, $a\notin \psi(M)$ and ${\cal I}$ is maximal such.  For notational simplicity, let us write $\psi \in {\cal I}$ rather than $\psi(M) \in {\cal I}$.  If $\phi$ is such that $\phi(M)\notin {\cal I}$ then, by maximality of ${\cal I}$, $a\in \psi(M)+\phi(M)$ for some $\psi \in {\cal I}$.

Take $f\in M^\ast$ such that $\sum_{\psi\in {\cal I}}\psi(M)\leq {\rm ker}(f)$ and $f(a)\neq 0$.  By \ref{dualann} (with dual formulas on the other side), $f\in D\psi(M^\ast)$ for all $\psi\in {\cal I}$ but, for all $\phi\notin {\cal I}$, $f\notin D\phi(M^\ast)$:  for, given $\phi\notin {\cal I}$, choose $\psi\in {\cal I}$ such that $a\in \psi(M)+\phi(M)= (\psi+\phi)(M)$ so, by \ref{dualann}, $f\notin D(\psi+\phi)(M^\ast) = (D\psi(M^\ast)\cap D\phi(M^\ast))$ so, since $f\in D\psi(M^\ast)$, $f\notin D\phi(M^\ast)$ as claimed.  Therefore the pp-type, $p$, of $f$ in $M^\ast$ is $\{ D\psi: \psi\in {\cal I}\}$.

We claim that this pp-type, $p$, is {\bf irreducible} (meaning that it is the pp-type of some element in an indecomposable pure-injective).  We check Ziegler's criterion (\cite[4.4]{Zie}, see \cite[4.3.49]{PreNBK}).  For that, we take any pp formulas $D\phi_1, D\phi_2 \notin p$ that is, as shown above, with $\phi_1, \phi_2 \notin  {\cal I}$.  By maximality of ${\cal I}$ there are $\psi_1, \psi_2 \in {\cal I}$ such that $a\in (\psi_i+\phi_i)(M)$ ($i=1,2$) so, setting $\psi =\psi_1+\psi_2$, we have $a \in (\psi+\phi_1)(M)\,\cap \,(\psi+\phi_2)(M)$.  Since $f(a)\neq 0$, \ref{dualann} gives $f\notin D((\psi+\phi_1)\wedge (\psi+\phi_2))(M^\ast)$.  That is, $f\notin ((D\psi\wedge D\phi_1)+(D\psi\wedge D\phi_2))(M^\ast)$.  That is, we have $D\psi \in p$ such that $(D\psi\wedge D\phi_1)+(D\psi\wedge D\phi_2)\notin p$, and that is Ziegler's criterion, so our claim is established.

Therefore any element with pp-type $p$ in a pure-injective module is contained in an indecomposable summand of that module (see \cite[\S4.3.5, esp.\,4.3.46]{PreNBK}); applied to $f\in M^\ast$, we have the theorem.
\end{proof}

Here is our example.

\begin{example}\label{notdual}\marginpar{notdual} Let $R$ be the free associative algebra $K\langle X,Y\rangle$ over a field $K$.  Then $R$ is a domain with no uniform one-sided ideal, so its injective hull $E$ has no indecomposable direct summand.  Nor does any product of copies of $E$ have a direct summand, since any nonzero submodule of a product of copies of $E$ must embed a copy of $R$.  It follows by \ref{dualindec} that $(0,{\rm Prod}(E))$ is an almost dual pair, with the first class in no sense determining the second.
\end{example}

This shows that not every almost dual pair has the form $({\cal S}, {\cal P})$ with ${\rm Prod}^+({\cal S}^\ast) \subseteq {\cal P} \subseteq {\rm Prod}({\cal S}^\ast)$, in particular not all classes ${\cal P}$ as in \ref{tpdef} arise from dual pairs, equivalently not all torsion theories on the functor category arise this way.  In particular, from \ref{prodcog} below we see that the torsion theories which arise from dual pairs of the form $({\cal S}, {\rm Prod}({\cal S}^\ast))$ are cogenerated by indecomposable injectives.

If ${\cal D}$ is a definable subcategory of ${\rm Mod}\mbox{-}R$ and $N\in {\rm Pinj}({\cal D})$ is indecomposable then $N$ is {\bf neg-isolated} in ${\cal D}$ if it is the hull of a pp-type which is neg-isolated modulo ${\cal D}$, equivalently if, whenever $N$ is a direct summand of a product $\prod_\lambda N_\lambda$ in ${\rm Pinj}({\cal D})$ already $N$ is a direct summand of some $N_\lambda$ (see \cite[\S 5.3.5, esp.\,5.3.48]{PreNBK}); another equivalent is that $(N\otimes -)$ is the injective hull of a functor in $(R\mbox{-}{\rm mod}, {\bf Ab})$ which becomes a simple object in the (finite type) localisation of that functor category at the torsion theory cogenerated by the $(N'\otimes -)$ with $N'\in {\rm Pinj}({\cal D})$ (\cite[12.5.6]{PreNBK}).

\begin{prop}\label{prodcog}\marginpar{prodcog} Let $M$ be any nonzero module and let $M^\ast$ be a dual of $M$.  Then ${\rm Prod}(M^\ast)$ is cogenerated by indecomposable pure-injectives, indeed ${\rm Prod}(M^\ast)= {\rm Prod}({\cal N})$ where ${\cal N}$ is the set of direct summands of $M^\ast$ which are neg-isolated in the definable category generated by $M^\ast$.
\end{prop}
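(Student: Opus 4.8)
The plan is to prove the displayed equality; the statement that ${\rm Prod}(M^\ast)$ is cogenerated by indecomposable pure-injectives then follows at once, since the members of ${\cal N}$ are indecomposable pure-injective and, by that equality, $M^\ast$ — hence every object of ${\rm Prod}(M^\ast)$ — is a direct summand of a product of them. One inclusion, ${\rm Prod}({\cal N})\subseteq{\rm Prod}(M^\ast)$, is immediate because each member of ${\cal N}$ is a direct summand of $M^\ast$. For the reverse inclusion it is enough, $M^\ast$ being pure-injective, to exhibit a pure embedding of $M^\ast$ into a product of members of ${\cal N}$, for such an embedding splits. As in the proofs of \ref{anydual} and \ref{dualindec} I would build this embedding one element at a time (or, more carefully, one finite tuple at a time): writing $p_f$ for the pp-type of a nonzero element $f$ of $M^\ast$, it suffices to produce, for each such $f$, a product $P_f$ of members of ${\cal N}$ together with an element of $P_f$ whose pp-type in $P_f$ is exactly $p_f$; then \cite[4.3.9]{PreNBK} supplies a morphism $\alpha_f\colon M^\ast\to P_f$ carrying $f$ to that element, and $\prod_f\alpha_f$, the product over all nonzero $f$, is the desired pure (hence split) embedding.

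To realize $p:=p_f$ I would run the construction from the proof of \ref{dualindec}. First, $\{D\chi(M):\chi\in p\}$ is a lattice ideal in the lattice of pp-definable subgroups of the left module $M$: it is closed under finite sums since $D(\chi_1\wedge\chi_2)=D\chi_1+D\chi_2$ and $p$ is closed under $\wedge$, and, by \ref{dualann}, it contains $D\chi'(M)$ only when $D\chi'(M)\leq{\rm ker}(f)$, that is, only when $\chi'\in p$. Given any pp formula $\phi$ for right modules with $\phi\notin p$, \ref{dualann} yields $a=a_\phi\in D\phi(M)$ with $f(a)\neq 0$, so $a$ lies in no member of that lattice ideal; extend it, by Zorn's Lemma, to a lattice ideal ${\cal I}_\phi$ of pp-subgroups of $M$ maximal with respect to not containing $a$, and apply the argument of \ref{dualindec} to ${\cal I}_\phi$ and $a$. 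This produces $f_\phi\in M^\ast$ lying in an indecomposable direct summand $N_\phi$ of $M^\ast$ whose pp-type (in $M^\ast$, equivalently in $N_\phi$) equals $\{D\psi:\psi(M)\in{\cal I}_\phi\}$; by construction this pp-type contains $p$ (as ${\cal I}_\phi$ contains every $D\chi(M)$ with $\chi\in p$) and does not contain $\phi$ (as $a\in D\phi(M)$ but $a$ lies in no member of ${\cal I}_\phi$). Hence $p=\bigcap\{\,\text{pp-type of }f_\phi : \phi\text{ a pp formula},\ \phi\notin p\,\}$, so, pp formulas commuting with products, the element $(f_\phi)_\phi$ of $\prod_{\phi\notin p}N_\phi$ has pp-type precisely $p$. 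Taking $P_f=\prod_{\phi\notin p}N_\phi$ then completes the reduction — provided each $N_\phi$ lies in ${\cal N}$.

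The hard part is exactly this: showing that each $N_\phi$ is neg-isolated in the definable subcategory ${\cal D}$ generated by $M^\ast$. I would establish it by showing the pp-type $p_\phi:=\{D\psi:\psi(M)\in{\cal I}_\phi\}$ is neg-isolated modulo ${\cal D}$ in the sense of \cite[\S5.3.5]{PreNBK}. The maximality of the lattice ideal ${\cal I}_\phi$ ought to supply the neg-isolating pp formula $\theta_\phi$ — the natural candidate being the elementary dual (recall that, by elementary duality, ${\cal D}$ is the dual of the definable subcategory generated by $M$) of a pp formula describing a minimal way in which $a$ is forced to lie in a sum of a member of ${\cal I}_\phi$ with the solution set of that formula in $M$ — after which one checks, much as Ziegler's criterion is checked in the proof of \ref{dualindec} but with the extra uniformity neg-isolation requires, that any element of a module in ${\cal D}$ whose pp-type contains $p_\phi$ but not $\theta_\phi$ realises $p_\phi$ exactly. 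Equivalently, one can verify the criterion recalled before the statement directly: given $N_\phi$ a direct summand of a product $\prod_\lambda N_\lambda$ with all $N_\lambda\in{\rm Pinj}({\cal D})$, take the element of $\prod_\lambda N_\lambda$ realising $p_\phi$ and use $\theta_\phi$ to pick a coordinate $\lambda$ whose component still realises $p_\phi$; since $p_\phi$ is irreducible, that component lies in an indecomposable summand of $N_\lambda$, necessarily a copy of $N_\phi$. Granting this, each $N_\phi$ belongs to ${\cal N}$, the construction of the previous paragraph goes through, and ${\rm Prod}(M^\ast)={\rm Prod}({\cal N})$, as required.
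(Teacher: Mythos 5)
Your proposal follows essentially the same route as the paper's own proof: for each nonzero $f\in M^\ast$ and each $\phi$ outside its pp-type $p$ you choose $a_\phi\in D\phi(M)$ with $f(a_\phi)\neq 0$, extend $\{D\chi(M):\chi\in p\}$ to a lattice ideal maximal with respect to avoiding $a_\phi$, and realise $p$ as the intersection of the resulting types inside a product of the neg-isolated summands $N_\phi$, which is precisely how the paper combines the arguments of \ref{anydual} and \ref{dualindec}. The one step you leave sketchy --- that each $N_\phi$ is neg-isolated in the definable subcategory generated by $M^\ast$ --- is asserted with comparable brevity in the paper itself, so nothing essential is missing relative to the published argument.
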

\begin{proof} We get this by combining the arguments of \ref{anydual} and \ref{dualindec}.  Take $f\in M^\ast$, $f\neq 0$ and let $p$ be the pp-type of $f$ in $M^\ast$.  As before, for each $\psi \in p^-$ we have $D\psi(M)\nleq \sum_{\phi\in p}D\phi(M)$.  Choose $a_\psi \in D\psi(M)\setminus \sum_{\phi\in p}D\phi(M)$.  As in the proof of \ref{dualindec} choose a lattice ideal ${\cal I}$ in ${\rm pp}(M)$ which contains all the $D\phi(M)$ with $\phi\in p$ and such that $a\notin \sum_{D\phi\in {\cal I}}D\phi(M)$.  Then there is $f_\psi\in M^\ast$ such that $f_\psi(\sum_{D\phi\in {\cal I}}D\phi(M))=0$ - hence such that $f_\psi\in \phi(M^\ast)$ for each $D\phi\in {\cal I}$ - and such that $f(a)\neq 0$ so, by \ref{dualann}, $f\notin D\psi(M^\ast)$.  As in the proof of \ref{dualindec} the pp-type of $f_\psi$ is irreducible, indeed neg-isolated in the theory of $M^\ast$ by $\psi$ (this means that ${\cal I}$ is determined uniquely as a lattice ideal by the fact that it contains ${\cal I}$ and does not contain $\psi$).  This means that if we choose any minimal direct summand $N_\psi$ of $M^\ast$ which contains $f_\psi$ then this is indecomposable and neg-isolated in the definable subcategory generated by $M^\ast$.  Then, just as in the proof of \ref{anydual}, we deduce that $M^\ast $ embeds into a product of such neg-isolated pure-injectives, which is enough.
\end{proof}

\section{Definable subcategories}

There is a natural bijection, elementary duality (\cite[6.6]{HerzDual}, see \cite[\S 3.4.2]{PreNBK}) between the definable subcategories of $R\mbox{-}{\rm Mod}$ and those of ${\rm Mod}\mbox{-}R$ which can be defined in various ways, the most natural here being to take a definable subcategory ${\mathcal D}$ of $R\mbox{-}{\rm Mod}$ to ${\mathcal D}^{\rm d} =({\mathcal D}^\ast)^+$, which is a definable subcategory of ${\rm Mod}\mbox{-}R$ (see \cite[1.3.15]{PreNBK})) and, see \cite[3.4.21]{PreNBK}, $({\cal D}^{\rm d})^{\rm d} ={\cal D}$.  The next observation, therefore is immediate.

\begin{remark} \label{dualpairs}\marginpar{dualpairs} If ${\cal D}$ is a definable category of right $R$-modules and ${\cal D}^{\rm d}$ denotes the elementary dual definable category of left $R$-modules then both $({\cal D}^{\rm d}, {\cal D})$ and $({\cal D}, {\cal D}^{\rm d})$ are dual pairs.  We will refer to any such pair as a dual pair of definable categories.
\end{remark}

Here is an example of a dual pair which is not definable.

\begin{example}\label{notdualdual}\marginpar{notdualdual} Consider the almost dual pair $({\rm Add}({\mathbb Z}_{p^\infty}), {\rm Prod}(\overline{{\mathbb Z}_{(p)}}))$ which is ``cogenerated" by the p-adic integers $\overline{{\mathbb Z}_{(p)}}$ regarded as a ${\mathbb Z}$-module. This is a dual pair, since $({\rm Add}({\mathbb Z}_{p^\infty}))^\ast = {\rm Prod}(\overline{{\mathbb Z}_{(p)}})$, and can equally be regarded as being ``generated" by the Pr\"{u}fer group ${\mathbb Z}_{p^\infty}$.

The dual of $\overline{{\mathbb Z}_{(p)}}$ is ${\mathbb Z}_{p^\infty}\oplus {\mathbb Q}^{(2^{\aleph_0})}$, which is not in ${\rm Add}({\mathbb Z}_{p^\infty})$ so this is not a dual pair of definable subcategories (cf.~\ref{def}).  Indeed the dual pair consisting of definable subcategories which minimally contains this pair is $({\rm Add}({\mathbb Z}_{p^\infty}\oplus {\mathbb Q}), {\rm Prod}^+(\overline{{\mathbb Z}_{(p)}} \oplus {\mathbb Q}))$.
\end{example}

\begin{prop}\label{Sdef}\marginpar{Sdef} (\cite[2.2]{LenzTrans}, \cite[4.2]{CBlfp}, see \cite[4.1]{Holm}) Suppose that $({\mathcal S},{\mathcal P})$ is an almost dual pair over $R$.  Then the following are equivalent:

\noindent (i)  ${\mathcal S}$ is definable;

\noindent (ii) ${\cal S}$ is closed under products;

\noindent (iii) ${\cal S}$ is preenveloping in $R\mbox{-}{\rm Mod}$.
\end{prop}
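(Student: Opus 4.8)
The plan is to prove the cycle of implications (i) $\Rightarrow$ (ii) $\Rightarrow$ (i) and then (i)/(ii) $\Leftrightarrow$ (iii). The implication (i) $\Rightarrow$ (ii) is immediate, since any definable subcategory is by definition closed under products. For (ii) $\Rightarrow$ (i), recall from Lemma \ref{Sclosed} that an almost dual pair $({\mathcal S},{\mathcal P})$ already has ${\mathcal S}={\rm Add}({\mathcal S})=\varinjlim{\mathcal S}$ and ${\mathcal S}$ closed under pure submodules (the last because in a pure-exact sequence $0\to L\to M\to N\to 0$ with $M\in{\mathcal S}$ we get $L\in{\mathcal S}$). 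So ${\mathcal S}$ is already closed under direct limits and pure submodules; adding closure under products gives exactly the three closure conditions defining a definable subcategory. Hence (ii) $\Rightarrow$ (i) is also essentially a matter of unwinding definitions against Lemma \ref{Sclosed}.

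The substantive content is the equivalence of (i)/(ii) with (iii). For (iii) $\Rightarrow$ (ii): if ${\mathcal S}$ is preenveloping in $R\mbox{-}{\rm Mod}$, then given a family $(M_i)_{i\in I}$ with each $M_i\in{\mathcal S}$, form an ${\mathcal S}$-preenvelope $\prod_i M_i \to S$ with $S\in{\mathcal S}$; each projection $\prod_i M_i\to M_i$ factors through $S$, and assembling these we get that $\prod_i M_i$ is a direct summand — up to the splitting argument — of $S$, or at least that the preenvelope is a split monomorphism, forcing $\prod_i M_i\in{\rm Add}({\mathcal S})={\mathcal S}$ by Lemma \ref{Sclosed}. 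The direction (i)/(ii) $\Rightarrow$ (iii) is where I expect the real work, and the natural route is the general theory of preenveloping classes: a definable subcategory ${\mathcal S}$ of a locally finitely presented (indeed Grothendieck) category, being closed under products and subobjects in the relevant sense (here: pure submodules) and hence "closed enough", admits preenvelopes. More concretely, I would invoke that ${\mathcal S}$ definable implies ${\mathcal S}$ is closed under products and pure submodules, so for any module $M$ one can embed $M$ purely into a large enough pure-injective in ${\mathcal S}$ — or use that a definable subcategory is determined by a set of pp-pairs and build the preenvelope by a transfinite/cardinality-bounded construction; this is exactly the content of the cited \cite[2.2]{LenzTrans}, \cite[4.2]{CBlfp}, \cite[4.1]{Holm}, and I would lean on the solution-set / cardinality bound argument there rather than reprove it.

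The main obstacle, then, is the passage from the closure properties of ${\mathcal S}$ to the existence of preenvelopes: this requires a cardinality bound on the size of a ``test'' module through which an arbitrary map from $M$ into a member of ${\mathcal S}$ must factor, so that the product of a representative set of such maps yields a preenvelope. The key inputs are (a) ${\mathcal S}$ is closed under products (to take that product), and (b) a bound — provided by the model-theoretic description of the definable subcategory ${\mathcal S}$ via pp-types, so that only boundedly many ``types'' of map need be considered. I would assemble the preenvelope $M\to \prod_j S_j$ as the product over a representative set $\{f_j: M\to S_j\}$ (with $S_j\in{\mathcal S}$ of bounded cardinality, $S_j$ a pure-injective hull relative to ${\mathcal S}$ if convenient), check it is a preenvelope by the usual factoring argument, and note $\prod_j S_j\in{\mathcal S}$ by (ii). Conversely, the fact that (iii) forces closure under products, combined with Lemma \ref{Sclosed}, closes the loop. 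So the skeleton is: (i) $\Leftrightarrow$ (ii) by definitions plus Lemma \ref{Sclosed}; (iii) $\Rightarrow$ (ii) by splitting a preenvelope of a product; (ii) $\Rightarrow$ (iii) by the bounded-preenvelope construction, citing the literature for the cardinality bound.
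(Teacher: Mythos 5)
Your argument is correct, and it matches what the paper does: the paper gives no proof of this proposition at all, simply citing \cite[2.2]{LenzTrans}, \cite[4.2]{CBlfp} and \cite[4.1]{Holm}, which is exactly where you defer the one substantive step (closure under products and pure submodules implies preenveloping, via the cardinality-bounded/solution-set construction). Your filling-in of the easy implications — (i)$\Leftrightarrow$(ii) from Lemma \ref{Sclosed} plus the definition of definable subcategory, and (iii)$\Rightarrow$(ii) by splitting a preenvelope of the product against ${\rm Add}({\mathcal S})={\mathcal S}$ — is sound.
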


Example \ref{notdual} shows that if $({\cal S}, {\cal P})$ is an almost dual pair and ${\cal S}$ is definable then it need not be that ${\cal P}$ is definable (that is, closed under direct limits, equivalently under pure epimorphisms).  On the other hand definability of ${\cal P}$ (which implies ${\cal P}^+={\cal P}$) does imply definability of ${\cal S}$.

\begin{theorem}\label{def}\marginpar{def} Suppose that $({\mathcal S},{\mathcal P})$ is an almost dual pair over $R$.  Then the following are equivalent:

\noindent (i)  ${\mathcal P}^+$ is definable;

\noindent (ii)  ${\mathcal P}^\ast \subseteq {\mathcal S}$;

\noindent (iii)  $({\mathcal P}^+)^\ast  \subseteq {\mathcal S}$

\noindent (iv)  ${\mathcal S}$ is definable and ${\rm Pinj}({\mathcal P}) \subseteq {\rm Prod}({\mathcal S}^\ast)$;

\noindent (v)  ${\mathcal S}$ is definable and every $A \in {\mathcal P}^+$ is pure in the dual of some module from ${\cal S}$;

\noindent (vi) ${\cal S}$ is definable and $({\cal S}, {\cal P})$ is a dual pair;

\noindent (vii) $({\cal S}, {\cal P}^+)$ (and hence also $({\cal P}^+, {\cal S})$) is a dual pair of definable subcategories.
\end{theorem}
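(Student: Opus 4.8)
The plan is to prove the chain of implications (i)$\Rightarrow$(vii)$\Rightarrow$(vi)$\Rightarrow$(iv)$\Rightarrow$(v)$\Rightarrow$(iii)$\Rightarrow$(ii)$\Rightarrow$(i). First I would record three elementary facts, writing $H(-)$ for pure-injective hull. (a) ${\cal P}^+$ is type-definable: it is closed under pure submodules by definition, under products since a product of pure submodules is pure in the product, and under pure-injective hulls since, if $C\in{\cal P}^+$ is pure in $B\in{\cal P}$, then $C$ is pure in $H(B)\in{\cal P}$, so $H(C)$ is a direct summand of $H(B)$ and hence $H(C)\in{\rm Prod}({\cal P})={\cal P}\subseteq{\cal P}^+$. (b) ${\rm Pinj}({\cal P}^+)={\rm Pinj}({\cal P})$, since a pure-injective module pure in some $B\in{\cal P}$ is a direct summand of $B$; consequently, as every character module is pure-injective, $M^\ast\in{\cal P}\iff M^\ast\in{\cal P}^+$ for every left module $M$, so that $({\cal S},{\cal P}^+)$ is again an almost dual pair with the same first component ${\cal S}$. (c) if ${\cal D}$ is any definable subcategory of $R\mbox{-}{\rm Mod}$ and $M\in{\cal D}$, then $M^{\ast\ast}\in{\cal D}$: indeed $M^\ast\in{\cal D}^\ast\subseteq({\cal D}^\ast)^+={\cal D}^{\rm d}$, which is definable, so the same observation applied to ${\cal D}^{\rm d}$ gives $M^{\ast\ast}\in({\cal D}^{\rm d})^{\rm d}={\cal D}$.

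The first three implications are short. For (i)$\Rightarrow$(vii): assuming ${\cal P}^+$ definable, I would put ${\cal D}=({\cal P}^+)^{\rm d}$, a definable subcategory of $R\mbox{-}{\rm Mod}$; by \ref{dualpairs} both $({\cal D},{\cal P}^+)$ and $({\cal P}^+,{\cal D})$ are dual pairs of definable subcategories, and since a dual pair is in particular an almost dual pair, ${\cal D}=\{M:M^\ast\in{\cal P}^+\}=\{M:M^\ast\in{\cal P}\}={\cal S}$ by (b) and the hypothesis; this is exactly (vii), and shows ${\cal S}$ is definable. For (vii)$\Rightarrow$(vi): (vii) gives ${\cal S}$ definable and $({\cal S},{\cal P}^+)$ a dual pair, so ${\rm Prod}({\cal S}^\ast)={\rm Pinj}({\cal P}^+)={\rm Pinj}({\cal P})$, which says precisely that $({\cal S},{\cal P})$ is a dual pair. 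And (vi)$\Rightarrow$(iv) is immediate.

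For (iv)$\Rightarrow$(v): given $A\in{\cal P}^+$, choose $B\in{\cal P}$ with $A$ pure in $B$; then $H(B)\in{\cal P}$ is pure-injective, so $H(B)\in{\rm Pinj}({\cal P})\subseteq{\rm Prod}({\cal S}^\ast)$, whence $H(B)$ is a direct summand of some $\prod_i M_i^\ast = (\bigoplus_i M_i)^\ast$ with $M:=\bigoplus_i M_i\in{\cal S}$ (using ${\cal S}={\rm Add}({\cal S})$, \ref{Sclosed}); since $A$ is pure in $B$, hence in $H(B)$, hence in $M^\ast$, (v) follows. For (v)$\Rightarrow$(iii): given $A\in{\cal P}^+$, by (v) there is $M\in{\cal S}$ with $A$ pure in $M^\ast$; dualising this pure-exact sequence exhibits $A^\ast$ as a direct summand of $M^{\ast\ast}$, and $M^{\ast\ast}\in{\cal S}$ by (c) (${\cal S}$ being definable by (v)), so $A^\ast\in{\cal S}$ since ${\cal S}$ is closed under summands; thus $({\cal P}^+)^\ast\subseteq{\cal S}$. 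Finally (iii)$\Rightarrow$(ii) is trivial, as ${\cal P}\subseteq{\cal P}^+$.

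The implication I expect to carry the real weight is (ii)$\Rightarrow$(i). First I would strengthen (ii) to $({\cal P}^+)^\ast\subseteq{\cal S}$: if $A$ is pure in $B\in{\cal P}$ then $A^\ast$ is a direct summand of $B^\ast\in{\cal P}^\ast\subseteq{\cal S}$, and ${\cal S}$ is closed under summands. By (a) it then remains only to show ${\cal P}^+$ is closed under direct limits. Given $A=\varinjlim_i A_i$ with each $A_i\in{\cal P}^+$, the direct sum $\bigoplus_i A_i$ lies in ${\cal P}^+$ (it is pure in the product), and the canonical pure epimorphism $\bigoplus_i A_i\to A$ dualises to a split monomorphism $A^\ast\to(\bigoplus_i A_i)^\ast$; since $(\bigoplus_i A_i)^\ast\in({\cal P}^+)^\ast\subseteq{\cal S}$ and ${\cal S}$ is closed under summands, $A^\ast\in{\cal S}$, that is $A^{\ast\ast}\in{\cal P}$; and as $A$ is pure in $A^{\ast\ast}$, $A\in{\cal P}^+$. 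Hence ${\cal P}^+$ is definable and the cycle closes. The only delicate points throughout are the bookkeeping separating ${\cal P}$, ${\cal P}^+$ and ${\rm Pinj}({\cal P})$, and the fact (c) that a definable subcategory absorbs double duals; everything else is a routine use of the behaviour of $(-)^\ast$ on pure-exact sequences, together with \ref{Sclosed} and \ref{dualpairs}.
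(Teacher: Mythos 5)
Your proposal is correct; every step checks out, including the preliminary facts (a)--(c) and the single cycle (i)$\Rightarrow$(vii)$\Rightarrow$(vi)$\Rightarrow$(iv)$\Rightarrow$(v)$\Rightarrow$(iii)$\Rightarrow$(ii)$\Rightarrow$(i). The core computations are the same as the paper's (dualising pure-exact sequences to split ones, $A$ pure in $A^{\ast\ast}$, Lemma \ref{Sclosed}), but your logical organisation differs in one substantive way. The paper's hub is (iv): it proves (i)$\Rightarrow$(iii)$\Rightarrow$(ii)$\Rightarrow$(iv)$\Rightarrow$(i), and the work in (ii)$\Rightarrow$(iv) is a direct verification that ${\cal S}$ is closed under products, via the observation that $\prod_i A_i$ is pure in $\prod_i A_i^{\ast\ast}\simeq(\bigoplus_i A_i^\ast)^\ast\in{\cal S}$, followed by an appeal to \ref{Sdef}. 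You bypass that argument entirely: your (ii)$\Rightarrow$(i) establishes definability of ${\cal P}^+$ without ever knowing that ${\cal S}$ is definable (your closure-under-direct-limits step uses $(\bigoplus_i A_i)^\ast\in({\cal P}^+)^\ast\subseteq{\cal S}$ rather than, as the paper does, products of members of ${\cal S}$), and you then recover definability of ${\cal S}$ from (i) by identifying ${\cal S}$ with the elementary dual $({\cal P}^+)^{\rm d}$ via Remark \ref{dualpairs}. What this buys you is a cleaner division of labour -- the product-closure of ${\cal S}$ comes for free from Herzog's duality rather than from a bespoke double-dual computation -- at the cost of leaning more heavily on \ref{dualpairs} (which the paper states without proof but which is independent of \ref{def}, so there is no circularity). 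You also spell out (iv)$\Rightarrow$(v)$\Rightarrow$(iii) where the paper dismisses (iv)$\Leftrightarrow$(v) and the remaining implications as immediate; your details are accurate.
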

\begin{proof}  (i)$\Rightarrow$(iii)  Let $A\in {\mathcal P}^+$; since ${\mathcal P}^+$ is definable, $A{^\ast} ^\ast \in {\mathcal P}^+$ so, being both pure-injective and pure in some member of ${\mathcal P}$, $A{^\ast} ^\ast$ is in ${\mathcal P}$.  Therefore, by the definition of almost dual pair, $A^\ast \in {\mathcal S}$.

Clearly (iii)$\Rightarrow$(ii).

(ii)$\Rightarrow$(iv) The second condition follows from the fact that any pure-injective is a direct summand of its double-dual.  To show that ${\cal S}$ is definable it will be enough, by \ref{Sdef}, to show that ${\cal S}$ is closed under direct products, so take $A_i\in {\cal S}$, $i\in I$.  Then for each $i$, $A_i^\ast \in {\cal P}$ so $\prod_iA_i^\ast \in {\cal P}$.  Since the canonical embedding $\bigoplus_iA_i^\ast \rightarrow \prod_iA_i^\ast$ is pure, the dual map gives $(\bigoplus_iA_i^\ast)^\ast$ as a direct summand of $(\prod_iA_i^\ast)^\ast$ which, by assumption, is in ${\cal S}$.  Also by assumption each ${A_i^\ast} ^\ast$  is in ${\cal S}$.  So $\prod_iA_i$, which is pure in $\prod_i{A_i^\ast} ^\ast \simeq (\bigoplus_iA_i^\ast)^\ast$, is, by \ref{Sclosed}, in ${\cal S}$, as required.

(iv)$\Rightarrow$(i) Let $M\in {\cal P}^+$, say $M$ is pure in $N\in {\rm Pinj}({\cal P})$.  By assumption there is $B\in {\cal S}$ with $N$ a direct summand of $B^\ast$, so $M$ purely embeds in $B^\ast$ and, dualising, $M^\ast$ is a direct summand of ${B^\ast}^\ast$ which, since ${\cal S}$ is definable, is in ${\cal S}$.  Hence $M^\ast \in {\cal S}$ and we have $({\cal P}^+)^\ast \subseteq {\cal S}$.

Suppose that we have a directed system $(A_i)_{i\in I}$ in ${\mathcal P}^+$ with direct limit $A$.  As in \ref{Sclosed}, $A$ is a pure epimorphic image of the direct sum of the $A_i$ and, dualising, we obtain a split embedding $A^\ast \rightarrow \big( \bigoplus_i A_i\big)^\ast = \prod_i A_i^\ast$.  We have just seen that each $A_i^\ast$ is in ${\mathcal S}$, hence $A^\ast \in {\mathcal S}$ and therefore $A{^\ast} ^\ast \in {\cal P}$.  Therefore $A\in {\mathcal P}^+$, as required.

(iv)$\Leftrightarrow$(v) This is immediate.

(iv)($\Rightarrow$)(vi) is immediate from the definitions, as is (i)+(iv)($\Rightarrow$)(vii).  Both (vi)($\Rightarrow$)(iv) and (vii)($\Rightarrow$)(i) are immediate.
\end{proof}

Example  \ref{notdualdual} shows that definability of ${\cal S}$ cannot be dropped from condition (iv).

\begin{cor} \label{defdualdual}\marginpar{defdualdual} If $({\cal S}, {\cal P})$ is an almost dual pair of $R$-modules and ${\cal P}$ is a definable subcategory of ${\rm Mod}\mbox{-}R$ then $({\cal P}, {\cal S})$ also is a(n almost) dual pair of $R$-modules, and ${\cal S}=({\cal P}^\ast)^+$.
\end{cor}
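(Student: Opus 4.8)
The plan is to read off almost everything from Theorem~\ref{def}. The first observation is that, ${\cal P}$ being definable, it is in particular closed under pure submodules, so ${\cal P}^+={\cal P}$, and hence ${\cal P}^+$ is definable; thus condition~(i) of \ref{def} holds, and so do all the equivalent conditions (ii)--(vii) there. Since ${\cal P}^+={\cal P}$, condition~(vii) says exactly that $({\cal S},{\cal P})$, and hence also $({\cal P},{\cal S})$, is a dual pair of definable subcategories. Recalling from \ref{dualpairs} that a dual pair of definable categories is one of the form $({\cal D}^{\rm d},{\cal D})$ or $({\cal D},{\cal D}^{\rm d})$ with ${\cal D}$ definable, and that such pairs are dual pairs, hence in particular almost dual pairs, this already yields that $({\cal P},{\cal S})$ is a(n almost) dual pair of $R$-modules. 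The one thing to watch is the left/right bookkeeping: \ref{def} is stated with ${\cal S}\subseteq R\mbox{-}{\rm Mod}$ in the first slot, but condition~(vii) explicitly records the swapped pair as well, so nothing further is needed.

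It then remains to identify ${\cal S}$ with $({\cal P}^\ast)^+$. For $({\cal P}^\ast)^+\subseteq{\cal S}$: condition~(iii) of \ref{def} gives $({\cal P}^+)^\ast={\cal P}^\ast\subseteq{\cal S}$, and since ${\cal S}$ is definable (by (iv) or (vi)), hence closed under pure submodules, $({\cal P}^\ast)^+\subseteq{\cal S}$ follows. For the reverse inclusion, take $M\in{\cal S}$; by the definition of almost dual pair $M^\ast\in{\cal P}$, so $M^{\ast\ast}=(M^\ast)^\ast\in{\cal P}^\ast$, and since the canonical evaluation map $M\to M^{\ast\ast}$ is a pure embedding we get $M\in({\cal P}^\ast)^+$. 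Alternatively, once (vii) tells us that $({\cal P},{\cal S})$ is a dual pair of definable subcategories, \ref{dualpairs} forces it to be $({\cal P},{\cal P}^{\rm d})$, whence ${\cal S}={\cal P}^{\rm d}=({\cal P}^\ast)^+$ straight from the definition of the elementary dual.

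I do not expect a genuine obstacle: the content reduces to noticing that definability of ${\cal P}$ is precisely hypothesis~(i) of \ref{def}, after which the assertion is a matter of unwinding definitions and keeping track of sides. If one preferred to argue from scratch, the single step needing a little work would be showing that ${\cal S}$ is closed under direct products --- whence ${\cal S}$ is definable by \ref{Sdef}: given $M_i\in{\cal S}$ we have $M_i^\ast\in{\cal P}$, so $\bigoplus_iM_i^\ast\in{\cal P}$ since it is pure in $\prod_iM_i^\ast\in{\cal P}$ and ${\cal P}$ is definable; then $(\bigoplus_iM_i^\ast)^{\ast\ast}\in{\cal P}$ as well (using $({\cal P}^{\rm d})^{\rm d}={\cal P}$), so $(\bigoplus_iM_i^\ast)^\ast\in{\cal S}$; and finally $\prod_iM_i$, being pure in $\prod_iM_i^{\ast\ast}=(\bigoplus_iM_i^\ast)^\ast$, lies in ${\cal S}$ by Lemma~\ref{Sclosed}.
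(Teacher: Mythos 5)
Your proof is correct and follows exactly the route the paper intends: the corollary is stated without proof as an immediate consequence of Theorem~\ref{def}, obtained by noting that definability of ${\cal P}$ gives ${\cal P}^+={\cal P}$, so condition (i) (and hence (iii), (vii)) of that theorem applies, with the identification ${\cal S}=({\cal P}^\ast)^+$ coming from (iii) together with purity of $M\to M^{\ast\ast}$. Nothing is missing.
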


\begin{cor} \label{defpr}\marginpar{defpr} If ${\cal S}$ is a definable subcategory of $R\mbox{-}{\rm Mod}$ then both $({\cal S}, {\rm Prod}^+({\cal S}^\ast))$ and $({\rm Prod}^+({\cal S}^\ast), {\cal S})$ are dual pairs of definable categories.
\end{cor}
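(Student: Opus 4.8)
The plan is to reduce the statement to Remark \ref{dualpairs} by showing that ${\rm Prod}^+({\mathcal S}^\ast)$ is nothing other than the elementary dual ${\mathcal S}^{\rm d}$ of ${\mathcal S}$. First recall, from the references cited just before \ref{dualpairs}, that since ${\mathcal S}$ is a definable subcategory of $R\mbox{-}{\rm Mod}$ the class ${\mathcal S}^{\rm d}=({\mathcal S}^\ast)^+$ is a definable subcategory of ${\rm Mod}\mbox{-}R$ and $({\mathcal S}^{\rm d})^{\rm d}={\mathcal S}$. In particular ${\mathcal S}^{\rm d}$ is closed under direct products and under pure submodules, and it contains ${\mathcal S}^\ast$.

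The one thing to check is the equality ${\rm Prod}^+({\mathcal S}^\ast)={\mathcal S}^{\rm d}$, and this is a routine double inclusion. For ``$\subseteq$'': ${\mathcal S}^{\rm d}$ is a class closed under direct products and under pure submodules which contains ${\mathcal S}^\ast$, hence it contains the smallest such class, which is ${\rm Prod}^+({\mathcal S}^\ast)$. For ``$\supseteq$'': ${\rm Prod}^+({\mathcal S}^\ast)$ contains ${\mathcal S}^\ast$ and is closed under pure submodules, hence contains the pure-submodule closure $({\mathcal S}^\ast)^+={\mathcal S}^{\rm d}$. One may also observe that ${\mathcal S}^\ast$ is in fact already closed under direct products, since ${\mathcal S}$, being definable, is closed under direct sums and $(\bigoplus_i M_i)^\ast=\prod_i M_i^\ast$ (as used in \ref{Sclosed}); since pure-submodule closure preserves product-closure, this gives ${\rm Prod}^+({\mathcal S}^\ast)=({\mathcal S}^\ast)^+$ directly, but this refinement is not needed.

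Finally, apply Remark \ref{dualpairs} to the definable subcategory ${\mathcal S}^{\rm d}$ of ${\rm Mod}\mbox{-}R$, whose elementary dual is $({\mathcal S}^{\rm d})^{\rm d}={\mathcal S}$: this yields that both $({\mathcal S},{\mathcal S}^{\rm d})$ and $({\mathcal S}^{\rm d},{\mathcal S})$ are dual pairs of definable categories, and substituting ${\mathcal S}^{\rm d}={\rm Prod}^+({\mathcal S}^\ast)$ finishes the proof. I do not expect any genuine obstacle here; the only point requiring care is bookkeeping of the left/right orientation when invoking the elementary-duality facts and \ref{dualpairs}, since ${\mathcal S}$ lives among left $R$-modules and ${\mathcal S}^{\rm d}$ among right $R$-modules.
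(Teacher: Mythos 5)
Your proof is correct and follows essentially the route the paper intends: the corollary is stated without explicit proof, being immediate from Remark \ref{dualpairs} once one identifies ${\rm Prod}^+({\mathcal S}^\ast)$ with the elementary dual ${\mathcal S}^{\rm d}=({\mathcal S}^\ast)^+$, which is exactly the double inclusion you carry out. The observation that ${\mathcal S}^\ast$ is already product-closed (via $(\bigoplus_i M_i)^\ast=\prod_i M_i^\ast$) is a nice, correct refinement but, as you note, not needed.
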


\section{Almost dual pairs generated by finitely presented modules}\label{secfp}\marginpar{secfp}

Holm showed that if ${\mathcal B}$ is an additive subcategory of $R\mbox{-}{\rm mod}$ then $(\varinjlim {\mathcal B}, {\rm Prod}({\mathcal B}^\ast))$ is a(n almost) dual pair.  We will give a somewhat modified proof of this here.

\begin{theorem}\label{bijft}\marginpar{bijft} (\cite[1.4]{Holm}) Let ${\mathcal B}$ be an additive subcategory of $R\mbox{-}{\rm mod}$.  Then $M\in \varinjlim {\mathcal B}$ iff $M^\ast \in {\rm Prod}({\mathcal B}^\ast)$ and hence $(\varinjlim {\mathcal B}, {\rm Prod}^+({\mathcal B}^\ast))$ is a dual pair, as is $(\varinjlim {\mathcal B}, {\rm Prod}({\mathcal B}^\ast))$.
\end{theorem}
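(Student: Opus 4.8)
The plan is to prove the biconditional $M\in\varinjlim{\mathcal B}\iff M^\ast\in{\rm Prod}({\mathcal B}^\ast)$ directly, and then read off the two ``dual pair'' assertions from the earlier structural results. First I would dispose of the easy implication: if $M=\varinjlim_i B_i$ with $B_i\in{\mathcal B}$, then the canonical map $\bigoplus_i B_i\to M$ is a pure epimorphism (\cite[p.~56]{RayGru}, as used in \ref{Sclosed}), so dualising gives a split monomorphism $M^\ast\hookrightarrow(\bigoplus_i B_i)^\ast=\prod_i B_i^\ast$, and $\prod_i B_i^\ast\in{\rm Prod}({\mathcal B}^\ast)$; hence $M^\ast\in{\rm Prod}({\mathcal B}^\ast)$.

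For the converse I would use the pp-type/approximation machinery already invoked in \ref{anydual} and \ref{dualann}. Suppose $M^\ast\in{\rm Prod}({\mathcal B}^\ast)$; I want to exhibit $M$ as a direct limit of modules in ${\mathcal B}$, equivalently (by a standard criterion, e.g.\ \cite[1.2.3]{PreNBK} together with the description of $\varinjlim{\mathcal B}$ via pp-conditions, or directly via Lenzing's characterisation) to show that every finite ``approximation datum'' in $M$ factors through some $B\in{\mathcal B}$. Concretely, given a tuple $\overline a$ from $M$, one wants a $B\in{\mathcal B}$ and a map $B\to M$ through which $\overline a$ lifts with the pp-type of $\overline a$ in $M$ generated, relative to ${\mathcal B}$, in the expected way. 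The bridge to the hypothesis is \ref{dualann}: the pp-type of an element $f\in M^\ast$ is controlled by which $D\phi(M)$ lie in $\ker f$, so membership of $M^\ast$ in ${\rm Prod}({\mathcal B}^\ast)$ translates, via the solution-set/annihilator correspondence and the fact that pp formulas commute with products (\cite[1.2.3]{PreNBK}), into a statement about the pp-definable subgroups $\phi(M)$ versus their analogues $\phi(B)$ for $B\in{\mathcal B}$. The key point to extract is that for each pp pair $\psi\leq\phi$ closed on all of ${\mathcal B}$, the pair is closed on $M$ as well; since $\varinjlim{\mathcal B}$ is exactly the definable-plus-direct-summand hull cut out by those pp pairs closed on ${\mathcal B}$ (Lenzing; see also \cite[2.2]{LenzTrans}, \cite[4.2]{CBlfp} cited in \ref{Sdef}), and since ${\mathcal S}=\varinjlim{\mathcal S}={\rm Add}({\mathcal S})$ is already closed under pure submodules and direct limits by \ref{Sclosed}, this forces $M\in\varinjlim{\mathcal B}$.

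I expect the main obstacle to be precisely this last translation: turning the summand condition ``$M^\ast$ is a direct summand of a product of copies of modules from ${\mathcal B}^\ast$'' into a clean statement that all pp pairs open on $M$ are open on some member of ${\mathcal B}$. A summand of a product is only a \emph{pure} submodule, so one must argue at the level of pp-types rather than elementary equivalence, using that $M^\ast$ and $M$ have dual pp-lattices (\ref{dualann}) and that $\bigl(\prod_i B_i\bigr)^\ast$, $\bigl(\bigoplus_i B_i\bigr)^\ast$ interact correctly with pp formulas; keeping careful track of which side of the duality each formula lives on is where the bookkeeping is delicate. Once the biconditional is in hand, the rest is formal: by \ref{Sclosed} the class ${\mathcal S}=\varinjlim{\mathcal B}$ satisfies the pure-exactness and $\varinjlim$ closure needed, ${\rm Prod}({\mathcal B}^\ast)$ is closed under products and summands and ${\rm Prod}^+({\mathcal B}^\ast)$ additionally under pure submodules and pure-injective hulls, so both $(\varinjlim{\mathcal B},{\rm Prod}({\mathcal B}^\ast))$ and $(\varinjlim{\mathcal B},{\rm Prod}^+({\mathcal B}^\ast))$ satisfy clause~1 of the definition of almost dual pair; clause~2 is the biconditional just proved (noting $M^\ast\in{\rm Prod}({\mathcal B}^\ast)\iff M^\ast\in{\rm Prod}^+({\mathcal B}^\ast)$ since $M^\ast$ is always pure-injective, by \cite[4.3.29]{PreNBK}). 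Finally, to see these are \emph{dual} pairs and not merely almost dual, one checks ${\rm Prod}(({\varinjlim{\mathcal B}})^\ast)={\rm Pinj}({\rm Prod}({\mathcal B}^\ast))$: the inclusion ${\rm Prod}({\mathcal B}^\ast)\subseteq{\rm Prod}(({\varinjlim{\mathcal B}})^\ast)$ is clear since ${\mathcal B}\subseteq\varinjlim{\mathcal B}$, and the reverse inclusion on pure-injectives follows from the biconditional applied to the modules in $\varinjlim{\mathcal B}$ together with the fact that duals are pure-injective, which pins down ${\rm Pinj}$ of both sides.
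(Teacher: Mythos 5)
Your forward direction is exactly the paper's argument, and your closing verification that the two pairs are (almost) dual pairs is essentially routine once the biconditional is in hand. The problem is the converse, where your proposal has a genuine gap. You reduce the hypothesis $M^\ast\in{\rm Prod}({\mathcal B}^\ast)$ to the statement that every pp-pair closed on all of ${\mathcal B}$ is closed on $M$, and then assert that $\varinjlim{\mathcal B}$ is ``exactly the definable-plus-direct-summand hull cut out by those pp pairs closed on ${\mathcal B}$''. That last claim is false: the class cut out by the pp-pairs closed on ${\mathcal B}$ is the definable \emph{subcategory} of $R\mbox{-}{\rm Mod}$ generated by ${\mathcal B}$, which is closed under direct products, whereas $\varinjlim{\mathcal B}$ in general is not (this is precisely the point of Example \ref{exacc}). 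Concretely, take ${\mathcal B}$ to be the finite abelian groups, so $\varinjlim{\mathcal B}$ is the torsion groups: the definable subcategory generated by ${\mathcal B}$ is all of ${\mathbb Z}\mbox{-}{\rm Mod}$, so every pp-pair closed on ${\mathcal B}$ is closed on ${\mathbb Z}$, yet ${\mathbb Z}\notin\varinjlim{\mathcal B}$ (and correspondingly ${\mathbb Z}^\ast={\mathbb Q}/{\mathbb Z}$ is not a summand of a product of finite groups, such products being reduced). You flag the difficulty yourself --- that being a summand of a product is strictly stronger than satisfying the pp-pair closures --- but the argument you then give discards exactly that extra strength, so it cannot recover $M\in\varinjlim{\mathcal B}$. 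This is the same distinction the paper labours over in Sections \ref{secdual}--4: ${\rm Prod}({\mathcal B}^\ast)$ can be properly contained in ${\rm Pinj}$ of the dual definable category.

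The paper's proof of the converse is quite different and avoids pp-types altogether. Given a split embedding $M^\ast\rightarrow\prod_i B_i^\ast$, it takes an ${\rm Add}({\mathcal B})$-precover $\alpha:\bigoplus_j A_j\rightarrow M$ (which exists by El Bashir \cite[3.2]{ElB}), and uses the natural isomorphism ${\rm Hom}_S({\rm Hom}_R(A,-),E)\simeq(-)^\ast\otimes_R A$ for finitely presented $A$ to convert the precover property into surjectivity of ${\rm Hom}_R((\bigoplus_j A_j)^\ast,B_i^\ast)\rightarrow{\rm Hom}_R(M^\ast,B_i^\ast)$ for each $i$; hence the given split embedding factors through $\alpha^\ast$, forcing $\alpha^\ast$ to be a split monomorphism, and the cogenerator property of $E$ then forces $\alpha$ to be a pure epimorphism, so $M\in\varinjlim{\mathcal B}$. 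If you want to salvage a pp-type argument you would have to work with the pp-types of individual elements of $M^\ast$ and their hulls (as in \ref{anydual} and \ref{prodcog}), not just with which pp-pairs are closed; as written, your reduction proves only that $M$ lies in the definable subcategory generated by ${\mathcal B}$, which is a strictly weaker conclusion.
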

\begin{proof} If $M\in \varinjlim {\mathcal B}$ then, as in the proof of \ref{Sclosed}, there is a pure epimorphism $\bigoplus_iB_i\rightarrow M$ with the $B_i$ in ${\cal B}$, and hence a split embedding $M^\ast \rightarrow \prod_iB_i^\ast$.  This proves the direction ($\Rightarrow$).

For the other direction, we follow \cite[4.2.18, 4.2.19]{MehdiThes}.  Suppose that $M\in R\mbox{-}{\rm Mod}$ is such that $M^\ast \in {\rm Prod}({\cal B}^\ast)$, say $i:M^\ast \rightarrow \prod_iB_i^\ast$ with $B_i\in {\cal B}$ is a split embedding.  We may assume that the duality $^\ast$ is with respect to the injective cogenerator $E$ of $S$-modules, where $S$ maps to the centre of $R$; thus all hom groups between $R$-modules are $S$-modules.

By \cite[3.2]{ElB} ${\rm Add}({\cal B})$ is precovering so choose a precover $\bigoplus_jA_j \xrightarrow{\alpha} M$ with the $A_j\in {\rm Add}({\cal B})$.  We will show that $\alpha$ is surjective.  Since $\alpha$ is a precover, for each $A\in {\cal B}$ the induced map $(A,\bigoplus_jA_j) \rightarrow (A,M)$ is surjective.  Therefore the induced map ${\rm Hom}_S((A,M),E) \rightarrow {\rm Hom}_S((A,\bigoplus_jA_j),E)$ is injective.  Since $A$ is finitely presented we have (e.g.~\cite[25.5(ii)]{WisBk}) the natural isomorphism ${\rm Hom}_S({\rm Hom}_R(A,-),E) \simeq {\rm Hom}_S(-,E)\otimes_RA =(-)^\ast \otimes_RA$, so we have that the map $M^\ast\otimes_RA \rightarrow (\bigoplus_jA_j)^\ast\otimes_RA$ induced by $\alpha$ is injective.  These are $S$-modules so, by injectivity of $E$ we have that the induced map ${\rm Hom}_S((\bigoplus_jA_j)^\ast\otimes_RA,E) \rightarrow {\rm Hom}_S(M^\ast\otimes_RA,E)$ is surjective and hence (by the ${\rm Hom}$/$\otimes$ adjunction) that ${\rm Hom}_R((\bigoplus_jA_j)^\ast,A^\ast) \rightarrow {\rm Hom}_R(M^\ast,A^\ast)$ is surjective.

This is so for each $A\in {\cal B}$, in particular for each $B_i$, so we deduce that the map $\prod_i ((\bigoplus_jA_j)^\ast,B_i^\ast) \rightarrow \prod_i (M^\ast,B_i^\ast)$ is surjective, hence that the induced map $ ((\bigoplus_jA_j)^\ast,\prod_iB_i^\ast) \rightarrow (M^\ast,\prod_iB_i^\ast)$ is surjective.  But the latter is the map induced by $\alpha^\ast: M^\ast \rightarrow (\bigoplus_jA_j)^\ast$ and so we deduce that $i\in (M^\ast,\prod_iB_i^\ast)$ factors through $\alpha^\ast$ and hence, since $i$ is monic, $\alpha^\ast$ is monic, indeed it is a pure, hence split, embedding.

Finally we use that $E$ is a cogenerator:  if $\alpha$ were not surjective then its cokernel would have a non-zero map to $E$ and this, by composition with $M\rightarrow {\rm coker}(\alpha)$, would give a non-zero element of $M^\ast$ sent to $0$ by $\alpha^\ast$, which would contradict what we have just shown.

We deduce that $\alpha$ is indeed an epimorphism and so the sequence $0\rightarrow {\rm ker}(\alpha) \rightarrow \bigoplus_jA_j \xrightarrow{\alpha} M \rightarrow 0$ is exact.  We have also seen that $\alpha^\ast$ is split, that is, the dual sequence $0\rightarrow M^\ast \rightarrow (\bigoplus_jA_j)^\ast \rightarrow ({\rm ker}(\alpha))^\ast \rightarrow 0$ is split.  Therefore the original sequence is pure, in particular $\alpha$ is a pure epimorphism and so (see the proof of \ref{Sclosed}), $M\in \varinjlim {\cal B}$, as required.
\end{proof}

\begin{examples}\label{exacc}\marginpar{exacc}  If $ {\mathcal B} ={\rm add}({\mathcal B})$ is a subcategory of $R\mbox{-}{\rm mod} $ then (see \cite{AdRo} or \cite{CBlfp}) $\varinjlim {\mathcal B}$ is a finitely accessible category and hence is a definable category in the sense of \cite{PreNBK}, \cite{PreDefAddCat} but it might not be a definable {\em subcategory} of $ R\mbox{-}{\rm Mod}$: it will be closed in $ R\mbox{-}{\rm Mod}$ under direct limits and pure submodules but it might not be closed under direct products. Indeed, the product of any collection of modules in $ \varinjlim {\mathcal B} $ will have a maximal, possibly proper, submodule which lies in $ \varinjlim {\mathcal B} $ and that will give the product within the category $ \varinjlim {\mathcal B}$. For an illustrative example, take $ {\mathcal B} $ to be the category of finite abelian groups, so $ \varinjlim {\mathcal B} $ is the category of torsion abelian groups; in this case we have the almost dual pair $(\varinjlim {\mathcal B}, (\varinjlim {\mathcal B})^\ast)$ whose second component is the category of profinite abelian groups.

In contrast, if we take $ {\mathcal B} $ to be the category of preprojective left modules over a tame hereditary algebra, then $ \varinjlim {\mathcal B} $ is the category of left modules which are torsionfree in the sense of \cite{RinInf} (see \cite[p.~743]{LenzTrans}) and ${\mathcal P}$ is the class of right modules which are divisible in the sense of that paper; each is a definable subcategory of the respective category of all modules (e.g.~by \cite[3.2]{Zad} and \ref{Sdef} or the above references).

If $R$ is a finite-dimensional algebra then the classes of the form ${\rm Prod}(({\mathcal B}\cup \{ _RR\})^\ast)$ for ${\cal B}$ an additive subcategory of $R\mbox{-}{\rm mod}$ are the classes of relative pure-injectives for purities determined by sets of finitely presented modules (see \cite[4.5]{Meh}).
\end{examples}

Holm says that $R$ is ``${\cal B}$-coherent'' if the equivalent conditions (i), (ii), (iii) of \ref{Sdef} are satisfied.  The results \ref{def} and \ref{bijft} here add to this and to \cite[5.6, 5.7]{Holm}, at the same time removing the additional, but as it turns out unnecessary, condition $R\in {\cal B}$ from the latter two results.

\vspace{4pt}

Holm \cite[1.3]{Holm} also considers the stronger condition (``${\cal B}$-noetherian") that ${\rm Prod}({\mathcal B}^\ast ) $ be a definable subcategory of $ {\rm Mod}\mbox{-}R $.  This is equivalent to the condition that every member of $ ({\rm Prod}({\mathcal B}^\ast )) $ be $ \Sigma $-pure-injective, where a module $M$ is said to be $\Sigma${\bf -pure-injective} if $M^{(I)}$ is pure-injective for any (and then it follows, for every) infinite set $I$.  We add, to the various characterisations \cite[1.3]{Holm} of this case, the following.

\begin{theorem}\label{noeth}\marginpar{noeth} Let $ {\mathcal B} $ be an additive subcategory of $ R\mbox{-}{\rm mod}. $ Set $ M=\bigoplus \{B: B\in {\mathcal B}' \} $ where $ {\mathcal B}'  $ is a skeletally small version of $ {\mathcal B}. $ Then the following conditions are equivalent:

\noindent (i) $ {\rm Prod}({\mathcal B}^\ast ) $ is definable;

\noindent (ii) $ M^\ast =\prod\{B^\ast : B\in {\mathcal B}' \} $ is $ \Sigma$-pure-injective;

\noindent (iii) $ M $ is noetherian over its endomorphism ring.

\noindent In this case ${\rm Prod}({\mathcal B}^\ast ) = {\rm Prod}^+({\mathcal B}^\ast )$ is definable and is the elementary dual of the definable subcategory $ \varinjlim {\mathcal B}$.
\end{theorem}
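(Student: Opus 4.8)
First I would note that all three conditions really concern a single module: each $B\in{\cal B}'$ is a direct summand of $M=\bigoplus_{B\in{\cal B}'}B$, so $B^\ast$ is a direct summand of $M^\ast=\prod_{B\in{\cal B}'}B^\ast$, while $M^\ast$ is itself a product of the $B^\ast$; hence ${\rm Prod}({\cal B}^\ast)={\rm Prod}(M^\ast)$, and condition (i) asserts exactly that ${\rm Prod}(M^\ast)$ is definable. The plan is then to prove (i)$\Leftrightarrow$(ii) via a general statement about ${\rm Prod}$ of a pure-injective module, and (ii)$\Leftrightarrow$(iii) by passing, through elementary duality, from $\Sigma$-pure-injectivity of $M^\ast$ to a chain condition on $M$ and then identifying that chain condition with $M$ being noetherian over ${\rm End}_R(M)$, using the special form of $M$.

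For the first equivalence I would prove: for any pure-injective module $N$, the class ${\rm Prod}(N)$ is definable if and only if $N$ is $\Sigma$-pure-injective. One direction is immediate: a definable subcategory is closed under arbitrary direct sums (each $\bigoplus_{i\in F}X_i$ with $F$ finite is a finite product, and $\bigoplus_{i\in I}X_i$ is the direct limit of these), so if ${\rm Prod}(N)$ is definable then $N^{(\omega)}\in{\rm Prod}(N)$; but every module in ${\rm Prod}(N)$ is pure-injective, being a direct summand of a product of copies of $N$, so $N^{(\omega)}$ is pure-injective, i.e.\ $N$ is $\Sigma$-pure-injective. For the converse I would invoke the structure theory of $\Sigma$-pure-injective modules (\cite[\S 4.4]{PreNBK}): $N$ decomposes as a direct sum $\bigoplus_\lambda N_\lambda$ of indecomposables, every module in the definable subcategory $\langle N\rangle$ generated by $N$ is pure-injective, and the indecomposable pure-injectives lying in $\langle N\rangle$ are precisely the $N_\lambda$; hence any $X\in\langle N\rangle$ is a pure-injective module whose indecomposable summands are among the $N_\lambda$, so $X$ is a direct summand of a product of copies of the $N_\lambda$ and therefore lies in ${\rm Prod}(N)$. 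Thus $\langle N\rangle={\rm Prod}(N)$, which is definable. Taking $N=M^\ast$ gives (i)$\Leftrightarrow$(ii).

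For (ii)$\Leftrightarrow$(iii) I would start from the fact that a module is $\Sigma$-pure-injective exactly when its lattice of pp-definable subgroups satisfies the descending chain condition (\cite[\S 4.4]{PreNBK}). By \ref{dualann} the pp-definable subgroups of $M^\ast$ are the annihilators ${\rm ann}_{M^\ast}(D\phi(M))$ of the pp-definable subgroups $D\phi(M)$ of $M$; since ${\mathbb Q}/{\mathbb Z}$ is an injective cogenerator for abelian groups, $A\mapsto{\rm ann}_{M^\ast}(A)$ is an order-reversing injection on the subgroup lattice of $M$, so it restricts to a lattice anti-isomorphism between the pp-definable subgroups of $M$ and those of $M^\ast$. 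Hence (ii) is equivalent to the ascending chain condition on pp-definable subgroups of $M$. Here is the step that uses the hypothesis on ${\cal B}$: since $M$ is a direct sum of finitely presented modules, the cyclic ${\rm End}_R(M)$-submodule ${\rm End}_R(M)\cdot m$ generated by any $m\in M$ is a pp-definable subgroup of $M$. Indeed $m$ lies in a finite partial sum $B_F=\bigoplus_{i\in F}B_i$, which is finitely presented; composing with the projection $M\to B_F$ and the inclusion $B_F\hookrightarrow M$ gives ${\rm End}_R(M)\cdot m=\{h(m):h\in{\rm Hom}_R(B_F,M)\}$, and by the standard correspondence between pp-formulas in one free variable and pointed finitely presented modules (\cite[\S 1.2]{PreNBK}) this set equals $\phi_{(B_F,m)}(M)$ for a pp-formula $\phi_{(B_F,m)}$ depending only on the pair $(B_F,m)$. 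It follows that every finitely generated ${\rm End}_R(M)$-submodule of $M$ is a finite sum of pp-definable subgroups, hence pp-definable; granting the ascending chain condition on pp-definable subgroups, any ${\rm End}_R(M)$-submodule of $M$, being the union of the directed family of its finitely generated submodules and hence of a directed family of pp-definable subgroups, must coincide with its largest member and so be finitely generated. Thus $M$ is noetherian over ${\rm End}_R(M)$. The converse is trivial, as every pp-definable subgroup is an ${\rm End}_R(M)$-submodule.

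It remains to deduce the last assertion. Assuming the equivalent conditions and writing ${\cal P}={\rm Prod}({\cal B}^\ast)$: by \ref{bijft}, $(\varinjlim{\cal B},{\cal P})$ is an almost dual pair, and since ${\cal P}$ is definable we have ${\cal P}^+={\cal P}$, so condition (i) of \ref{def} holds for this pair; \ref{def} then yields that $\varinjlim{\cal B}$ is definable and that $(\varinjlim{\cal B},{\cal P}^+)=(\varinjlim{\cal B},{\cal P})$ is a dual pair of definable subcategories, whence ${\cal P}=(\varinjlim{\cal B})^{\rm d}$. Finally ${\rm Prod}^+({\cal B}^\ast)={\rm Prod}({\cal B}^\ast)$: the inclusion $\supseteq$ holds because ${\cal P}={\rm Prod}({\cal B}^\ast)$, being definable, is closed under pure submodules while containing ${\cal B}^\ast$, and $\subseteq$ holds because direct summands are pure submodules. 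I expect the main obstacle to be the equivalence (ii)$\Leftrightarrow$(iii) and, inside it, the lemma identifying cyclic ${\rm End}_R(M)$-submodules of $M$ with pp-definable subgroups: this is precisely where finite presentability of the members of ${\cal B}$ is essential, since it is what lets ${\rm Hom}_R(B_F,-)$ behave well and makes the relevant subgroup cut out by finitely many $R$-linear conditions. The remaining ingredients --- the characterisation of $\Sigma$-pure-injectivity by the descending chain condition on pp-definable subgroups, the structure of $\Sigma$-pure-injective modules, the elementary-duality behaviour of pp-definable subgroups, and the earlier results \ref{bijft} and \ref{def} --- are standard or already in hand.
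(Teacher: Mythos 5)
Your proposal is correct and follows essentially the same route as the paper: (i)$\Leftrightarrow$(ii) via the standard fact that ${\rm Prod}(N)$ is definable exactly when the pure-injective $N$ is $\Sigma$-pure-injective, and (ii)$\Leftrightarrow$(iii) by translating $\Sigma$-pure-injectivity of $M^\ast$ into the descending chain condition on its pp-definable subgroups, passing by elementary duality to the ascending chain condition on those of $M$, and identifying that with the noetherian condition over ${\rm End}(M)$ using the fact that $M$ is a direct sum of finitely presented modules. The only difference is that where the paper simply cites \cite[9.34]{PreBk} and \cite[Observation 8]{Z-HZPSS}, you supply the arguments (correctly), and your handling of the final assertion via \ref{bijft} and \ref{def} matches the paper's.
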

\begin{proof} (i)$\Rightarrow$(ii) is immediate from the fact that a definable subcategory is closed under direct sums.

(ii)$\Rightarrow$(i)  It is well-known (e.g.~\cite[9.34]{PreBk}) and easy to prove (use the result quoted in the proof of \ref{Sclosed}) that if a module $M$ is $\Sigma$-pure-injective then ${\rm Prod}(M)$ is a definable subcategory of ${\rm Mod}\mbox{-}R$.

(i)$\Leftrightarrow$(iii)  It is shown in \cite[Observation 8, p.~705]{Z-HZPSS} that if a module is (a direct summand of) a direct sum of finitely presented modules then it is noetherian over its endomorphism ring iff it has the ascending chain condition on pp-definable subgroups.  That, by \cite[Lemma 2, Prop.~2]{Z-HZPSS} (also see \cite[1.3.15]{PreNBK}) is true of a module iff its Hom-dual has the descending chain condition on pp-definable subgroups and that, in turn, is equivalent to this Hom-dual being $\Sigma$-pure-injective (see \cite[4.4.5]{PreNBK} for a proof and sources for this last result).

The last statement follows by \ref{bijft}.
\end{proof}

An example here is $(R\mbox{-}{\rm Flat}, {\rm Inj}\mbox{-}R)$ where $R$ is a right noetherian ring.  This example illustrates that, in this situation, ${\cal S}$ need not equal ${\rm Pinj}({\cal S})$.  In fact, the case where both a definable category and its dual consist only of ($\Sigma$-)pure-injectives is exactly that where all the modules in these classes have finite endolength (e.g.~see \cite[4.2.25 and 1.3.15]{PreNBK}).

\begin{prop}\label{fel}\marginpar{fel}  If $({\cal S}, {\cal P})$ is an almost dual pair then the following conditions are equivalent:

\noindent (i) both ${\cal P}={\rm Prod}({\cal S}^\ast)$ and ${\cal S}={\rm Prod}({\cal S}^\ast)$;

\noindent (ii) $M^\ast$ as defined in \ref{noeth} is of finite length over its endomorphism ring.
\end{prop}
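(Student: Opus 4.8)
The plan is to prove Proposition \ref{fel} by linking the two displayed conditions through the material already assembled in the paper, chiefly Theorem \ref{noeth}, Theorem \ref{def}, and the remark immediately preceding the statement that finite endolength is exactly the situation where a definable category and its elementary dual both consist of (pure-)injectives. Note first that condition (i) forces ${\cal S}$ to be closed under products, hence definable by \ref{Sdef}, and also forces ${\cal P}={\rm Prod}({\cal S}^\ast)={\cal S}^\ast$-closure to coincide with ${\cal S}$ itself as a class of pure-injectives; in particular $({\cal S},{\cal P})$ is a dual pair with ${\cal S}={\rm Pinj}({\cal S})$ and ${\cal P}={\rm Pinj}({\cal P})$, so \emph{both} classes consist entirely of pure-injective modules. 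Dually, via the duality $^\ast$ and Lemma \ref{Sclosed}, the condition ${\cal S}={\rm Prod}({\cal S}^\ast)$ says every module in ${\cal S}$ is pure-injective and, being a summand of a product of duals, is $\Sigma$-pure-injective once ${\cal P}$ is; so we are precisely in the case where a definable subcategory and its elementary dual both consist of $\Sigma$-pure-injectives.

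For the implication (i)$\Rightarrow$(ii): from (i) we get, as above, that ${\cal S}$ is definable and consists of pure-injectives, hence of $\Sigma$-pure-injectives (a definable class consisting of pure-injectives automatically consists of $\Sigma$-pure-injectives, since it is closed under direct sums). By Corollary \ref{defpr} / Theorem \ref{def}, ${\cal P}^+$ is then also definable and equals the elementary dual of ${\cal S}$, and it too consists of $\Sigma$-pure-injectives. Now I invoke the cited fact (\cite[4.2.25 and 1.3.15]{PreNBK}) that a definable subcategory together with its dual consisting only of pure-injectives is equivalent to every module in it having finite endolength; applied to the module $M=\bigoplus\{B:B\in{\cal B}'\}$ — where here ${\cal B}$ should be taken so that $\varinjlim{\cal B}={\cal S}$, which exists because ${\cal S}$ definable implies ${\cal S}=\varinjlim({\cal S}\cap R\mbox{-}{\rm mod})$ — this says $M$ has finite endolength, i.e.\ $M$ is of finite length over its endomorphism ring, which is (ii). (One must be a little careful that $M$ as defined in \ref{noeth} relative to this ${\cal B}$ is indeed of finite endolength and not merely its individual summands; but finite endolength of a definable category is inherited by all its modules, including arbitrary direct sums from within it, which is exactly the content of the quoted characterisation.)

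For (ii)$\Rightarrow$(i): if $M$ has finite length over ${\rm End}(M)$, then in particular $M$ is noetherian over its endomorphism ring, so by \ref{noeth} condition (iii) we get ${\rm Prod}({\cal B}^\ast)$ definable and equal to the elementary dual of $\varinjlim{\cal B}$; but finite endolength is self-dual (the Hom-dual of a finite-endolength module has finite endolength — this is in \cite[1.3.15]{PreNBK} together with \cite[4.4.5]{PreNBK}), so $M^\ast$ also has finite endolength, whence $\varinjlim{\cal B}={\cal S}$ is definable and consists of pure-injectives, giving ${\cal S}={\rm Prod}({\cal S}^\ast)$, while ${\cal P}={\rm Prod}({\cal B}^\ast)={\rm Prod}({\cal S}^\ast)$ by \ref{bijft}. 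It remains to reconcile ${\cal S}$ with $\varinjlim{\cal B}$: since $({\cal S},{\cal P})$ is assumed an almost dual pair and, by \ref{bijft}, $(\varinjlim{\cal B},{\rm Prod}({\cal B}^\ast))$ is one with the same second component ${\cal P}$ (taking ${\cal B}={\cal S}\cap R\mbox{-}{\rm mod}$, which is legitimate since finitely presented modules in ${\cal S}$ have duals in ${\cal P}$), the uniqueness of the first component in an almost dual pair forces ${\cal S}=\varinjlim{\cal B}$.

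The main obstacle, I expect, is the bookkeeping around the choice of ${\cal B}$: \ref{noeth} and hence (ii) are phrased in terms of a fixed additive subcategory ${\cal B}\subseteq R\mbox{-}{\rm mod}$, whereas \ref{fel} starts from an abstract almost dual pair $({\cal S},{\cal P})$, so the statement implicitly assumes ${\cal S}=\varinjlim{\cal B}$ for the relevant ${\cal B}$ (presumably ${\cal B}={\cal S}\cap R\mbox{-}{\rm mod}$), and one needs \ref{Sdef} plus the finite-accessibility facts of \ref{exacc} to know that a definable ${\cal S}$ is of this form and that ${\cal B}$ is then essentially canonical. Once that identification is pinned down, both directions are short: (i)$\Rightarrow$(ii) is the quoted finite-endolength characterisation applied after noting everything in sight is $\Sigma$-pure-injective, and (ii)$\Rightarrow$(i) is \ref{noeth}(iii)$\Rightarrow$(i) together with the self-duality of finite endolength and \ref{bijft}. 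No genuinely new argument is needed beyond assembling \ref{Sdef}, \ref{def}, \ref{bijft}, \ref{noeth} and the finite-endolength dictionary.
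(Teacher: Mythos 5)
First, a point of reference: the paper gives no written proof of \ref{fel} at all --- it is stated as an immediate consequence of the sentence preceding it (the characterisation of finite endolength via \cite[4.2.25, 1.3.15]{PreNBK}), so your overall strategy of assembling \ref{Sdef}, \ref{noeth}, \ref{bijft} and the finite-endolength dictionary is certainly the intended one. The problem is that your write-up asserts, rather than proves, exactly the step on which the whole proposition turns. In (i)$\Rightarrow$(ii) you say that ${\cal P}^+$ ``too consists of $\Sigma$-pure-injectives.'' This does not follow from anything you have established. From (i) you get that ${\cal S}$ is definable and consists of pure-injectives, hence of $\Sigma$-pure-injectives; by elementary duality of pp-lattices this gives the \emph{descending} chain condition on ${\rm pp}(M)$, equivalently the \emph{ascending} chain condition on ${\rm pp}(M^\ast)$. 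To get finite endolength of $M^\ast$ you also need the descending chain condition on ${\rm pp}(M^\ast)$, i.e.\ that $M^\ast=\prod_B B^\ast$ is $\Sigma$-pure-injective, and that is precisely the assertion you are waving through. It is not automatic: a definable category consisting of pure-injectives need not have elementary dual consisting of pure-injectives (${\rm Inj}\mbox{-}{\mathbb Z}$ versus ${\mathbb Z}\mbox{-}{\rm Flat}$), and ${\cal P}={\rm Prod}({\cal S}^\ast)$ is closed under products but not under direct sums or pure submodules, so pure-injectivity of its members gives no $\Sigma$-pure-injectivity. The gap is not cosmetic: reading the (type-mismatched) second clause of (i) in the natural way, as ${\cal S}={\rm Prod}({\cal P}^\ast)$, and taking ${\cal B}=R\mbox{-}{\rm mod}$, the implication (i)$\Rightarrow$(ii) becomes ``every left $R$-module pure-injective implies $R$ has finite representation type'', i.e.\ the pure-semisimplicity problem. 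So either condition (i) must be read as including definability of ${\cal P}$ (equivalently ${\cal P}={\cal P}^+$), in which case both classes are closed under direct sums, both consist of $\Sigma$-pure-injectives, and the two chain conditions on ${\rm pp}(M)$ combine to give finite length --- or the implication cannot be obtained by the soft arguments you use. You need to pin this down explicitly rather than assert it.

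There is a second, smaller but genuine, problem in your (ii)$\Rightarrow$(i). You argue that $({\cal S},{\cal P})$ and $(\varinjlim{\cal B},{\rm Prod}({\cal B}^\ast))$ have ``the same second component'' and invoke uniqueness of the first component of an almost dual pair; but in an almost dual pair it is ${\cal P}$ that determines ${\cal S}$, not conversely, so nothing forces ${\cal P}={\rm Prod}({\cal B}^\ast)$. The paper's own Example \ref{notdual} makes this concrete: $(0,{\rm Prod}(E))$ is an almost dual pair with ${\cal B}=0$, $M^\ast=0$ trivially of finite endolength, yet ${\cal P}={\rm Prod}(E)\neq 0={\rm Prod}({\cal S}^\ast)$. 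So (ii)$\Rightarrow$(i) as you state it fails unless the proposition is read as being about pairs already of the form $(\varinjlim{\cal B},{\rm Prod}({\cal B}^\ast))$ (or unless the equality ${\cal P}={\rm Prod}({\cal S}^\ast)$ is treated as part of the standing hypotheses rather than part of what is to be proved). Your instinct that the bookkeeping around ${\cal B}$ is ``the main obstacle'' is right, but it is an obstacle to the truth of the implication as you have set it up, not merely to its exposition, and it needs to be resolved rather than deferred.
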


\begin{example}\label{}\marginpar{} Suppose that $ R $ is an artin algebra and that $ {\mathcal B} $ is an additive subcategory of $ R\mbox{-}{\rm mod}. $ If $ {\mathcal B} $ is closed under submodules then $ \varinjlim {\mathcal B} $ is definable \cite[2.2]{KraPre}.  By \ref{bijft} and \ref{defpr} and the fact that $^\ast$ is a duality between $R\mbox{-}{\rm mod}$ and ${\rm mod}\mbox{-}R$ we deduce that if ${\cal B}'$ is any additive category of ${\rm mod}\mbox{-}R$ closed under quotients then ${\rm Prod}^+({\cal B}')$ is a definable subcategory of ${\rm Mod}\mbox{-}R$.

The pair $(\varinjlim {\mathcal B}, {\rm Prod}^+({\cal B}^\ast))$ will satisfy the conditions of \ref{noeth} iff ${\cal B}$ contains only finitely many indecomposable modules up to isomorphism (by \cite[4.4.31]{PreNBK}).

If, as in Example \ref{exacc}, we take $R$ to be a tame hereditary algebra and we take ${\cal P}_0$ to be the set of  indecomposable preprojective left $R$-modules and ${\cal I}_0$ to be the set of indecomposable preinjective right $R$-modules then we have the dual pair of definable categories $(\varinjlim {\cal P}_0, {\rm Prod}^+({\cal I}_0))$ (as well as $({\rm Prod}^+({\cal I}_0), \varinjlim {\cal P}_0)$) but only the second class, ${\rm Prod}^+({\cal I}_0)$, satisfies the condition of \ref{noeth} (unless $R$ is of finite representation type).
\end{example}

\end{document}